\documentclass[reqno,11pt]{amsart}
\usepackage{amsmath,amssymb,amsthm,mathrsfs}
\usepackage{epsfig,color}
\usepackage[latin1]{inputenc}
\usepackage[english]{babel}
\usepackage[numbers]{natbib}
\usepackage[colorlinks, citecolor=blue, linkcolor=red]{hyperref}

\numberwithin{equation}{section}

\newtheorem{ackn}{Acknowledgments\!}

\def\00{{\bf 0}}

\def\RR{\mathbb R}

\newcommand{\diver}{{\rm div \,}}

\newtheorem*{theorem*}{Theorem}
\newtheorem{theorem}{Theorem}[section]
\newtheorem{lemma}[theorem]{Lemma}
\newtheorem{proposition}[theorem]{Proposition}

\newtheorem{corollary}[theorem]{Corollary}

\theoremstyle{definition}
\newtheorem{example}[theorem]{Example}
\newtheorem{remark}[theorem]{Remark}

\begin{document}
    \title[Quasilinear Liouville equation on manifolds with nonnegative Ricci curvature]{Quasilinear Liouville equation \\ on manifolds with nonnegative Ricci curvature}
  \date{}

\author{Giovanni Catino, Dario Daniele Monticelli, Alberto Roncoroni}

\address{G. Catino, Dipartimento di Matematica, Politecnico di Milano, Piazza Leonardo da Vinci 32, 20133, Milano, Italy.}
\email{giovanni.catino@polimi.it}
\address{D.D. Monticelli, Dipartimento di Matematica, Politecnico di Milano, Piazza Leonardo da Vinci 32, 20133, Milano, Italy.}
\email{dario.monticelli@polimi.it}
\address{A. Roncoroni, Dipartimento di Matematica, Politecnico di Milano, Piazza Leonardo da Vinci 32, 20133, Milano, Italy.}
\email{alberto.roncoroni@polimi.it}

\begin{abstract}
We prove rigidity and classification results for the quasilinear Liouville
equation associated with the $n$-Laplacian on complete noncompact Riemannian
manifolds with nonnegative Ricci curvature. Our first result shows that, under
a sharp logarithmic lower bound, the ambient manifold must be
isometric to the Euclidean space and the solution must be one of the standard
 bubbles. We also prove a finite-mass rigidity theorem under the
corresponding sharp asymptotic lower bound. We show that any logarithmic lower bound forces positive
asymptotic volume ratio and one-endedness of the manifold. Finally, we construct solutions on
nonflat manifolds with nonnegative Ricci curvature showing the sharpness of our hypotheses.
\end{abstract}

\maketitle

\begin{center}

\noindent{\it Key Words:} $n$-Laplacian, Liouville equation, nonnegative
Ricci curvature, finite mass, asymptotic volume ratio, rigidity.

\medskip

\centerline{\bf AMS subject classification: 35J92, 35B53, 53C21.}

\end{center}

\

\

\section{Introduction}

The classical \emph{Liouville equation}
\begin{equation}\label{eq_1}
-\Delta u=e^{u}
\qquad\text{in }\mathbb R^2
\end{equation}
is one of the basic equations in geometric analysis. It appears naturally in
conformal geometry, since conformal changes of the Euclidean metric in dimension
two are governed by scalar elliptic equations of Liouville type. More precisely,
if $g_u=e^u g_{\mathrm{eucl}}$, then the Gaussian curvature of $g_u$ is
prescribed by an equation of the above form, up to normalization. The
classification of entire solutions is therefore closely related to the global
geometry of conformal metrics in $\RR^2$.

Without additional assumptions, the Liouville equation \eqref{eq_1} admits many entire
solutions. The fundamental rigidity statement is the theorem by
Chen-Li~\cite{ChenLi}, which asserts that every solution of \eqref{eq_1} satisfying the \emph{finite-mass condition}
$$
\int_{\mathbb R^2}e^u\,dx<+\infty
$$
is a standard bubble. In our normalization, this means that
$$
u(x)=
\log\left(
\frac{2\sqrt2\,\lambda}
{1+\lambda^2|x-x_0|^2}
\right)^2
$$
for some $\lambda>0$ and $x_0\in\mathbb R^2$. In dimensions $n\ge3$, the classical Laplacian is naturally associated with
the critical semilinear equation
$$
-\Delta u=u^{\frac{n+2}{n-2}}
\qquad\text{in }\mathbb R^n,
$$
whose positive solutions were classified by Caffarelli-Gidas-Spruck~\cite{CaffarelliGidasSpruck} (see also \cite{ChenLi, LiZhang}). The quasilinear
analogue involving the $p-$Laplacian,
$$
-\Delta_p u=u^{p^*-1},\qquad1<p<n,
\qquad
p^*=\frac{np}{n-p},
$$
has also been extensively studied. Classification and symmetry results for
critical $p-$Laplace equations in $\RR^n$, or on Riemannian manifolds $(M,g)$ with nonnegative Ricci curvature, were obtained in
\cite{CM, CMR, CFP, CFR,DamascelliMerchanMontoroSciunzi, FMM, Oup, Sciunzi, Sun-Wang, Vetois, Vetois2}.
Related rigidity phenomena have also been investigated beyond the Riemannian critical Sobolev setting. For singular or degenerate critical
equations arising from Caffarelli--Kohn--Nirenberg inequalities, see e.g.
\cite{CC,CatMonRon2, CirPol}. In CR and
sub-Riemannian geometries, Liouville-type rigidity and classification results
were obtained in the Heisenberg group in \cite{CatLiMonRon, FlyVet,JL}, and on
Sasakian manifolds in \cite{CatMonRonWang}.

The conformally invariant quasilinear analogue of the two-dimensional Liouville
equation on a Riemannian manifolds $(M,g)$ is the \emph{$n-$Liouville equation}
\begin{equation}\label{eq_crit}
	-\Delta_n u=e^{u} \quad \text{ in } M\,  ,
\end{equation}
where
\[
\Delta_nu=\operatorname{div}\left(|\nabla u|^{n-2}\nabla u\right)
\]
is the $n-$Laplace-Beltrami operator. Here a solution of \eqref{eq_crit} is intended in the weak sense, i.e. $u\in W^{1,n}_{loc}(M)\cap L^{\infty}_{loc}(M)$ which satisfies
$$
\int_{M} |\nabla u|^{n-2}g(\nabla u,\nabla\varphi)\, dV_g=\int_{M} e^u\varphi\, dV_g\, , \quad \text{ for all } \varphi\in W^{1,n}_{0}(M)\,,
$$
where $W^{1,n}_0(M)$ denotes the set of compactly supported functions of $W^{1,n}(M)$.

In this setting, Esposito~\cite{Esp} proved the Euclidean classification
theorem: every solution of
$$
-\Delta_nu=e^u
\qquad\text{in }\mathbb R^n
$$
satisfying the \emph{finite-mass condition}
$$
\int_{\RR^n}e^u\,dx<\infty
$$
is one of the explicit bubbles
\begin{equation}\label{Tal_n}
\mathcal{U}_{\lambda,x_0}(x)=\log\left(\frac{c_n\lambda}{1+\lambda^{\frac{n}{n-1}}|x-x_0|^{\frac{n}{n-1}}}\right)^n\, ,
\end{equation}
for some $\lambda>0$ and $x_0\in\mathbb R^n$, where
$$
c_n=n^{1/n} \left( \frac{n^2}{n-1}\right)^{\frac{n-1}{n}}\,.
$$
These solutions satisfy the sharp logarithmic asymptotic behaviour
$$
\mathcal{U}_{\lambda,x_0}(x)
=
-\frac{n^2}{n-1}\log |x|+O(1) \, ,
\quad\text{as } |x|\to+\infty\, .
$$
The same problem has also been studied in connection with blow-up analysis,
Harnack inequalities, anisotropic
extensions and weighted singular analogues; see, for instance,
\cite{CirEspLi,CiraoloLi,EspositoLucia,Sun-Wang}.

The purpose of this paper is to investigate how the Euclidean rigidity theory
for the $n-$Liouville equation extends to complete Riemannian manifolds with
nonnegative Ricci curvature. This is a natural geometric setting: on the one
hand, nonnegative Ricci curvature provides strong global comparison tools; on
the other hand, it still allows many non-Euclidean asymptotic geometries,
including cylindrical ends, asymptotically conical ends, and ends with zero
asymptotic volume ratio.

Several recent works have shown that special solutions of critical elliptic
equations may force strong rigidity of the ambient manifold. Results of this
type were obtained, for instance, by Catino--Monticelli~\cite{CM}
and by Ciraolo--Farina--Polvara~\cite{CFP} for semilinear
equations on manifolds with nonnegative Ricci curvature. In dimension two,
Cai--Lai~\cite{CaiLai} studied Liouville equations on complete surfaces with
nonnegative Gaussian curvature under finite-mass assumptions. More recently,
Ou~\cite{Ou} proved a sharp rigidity theorem for the Liouville equation on
complete noncompact surfaces with nonnegative curvature. In the normalization
$$
-\Delta u=e^u\qquad\text{in }M^2\, ,
$$
Ou's theorem states that if
$$
u(x)\ge -4\log r(x)-\alpha\log F(r(x)),
$$
$\alpha\geq0$, outside a compact set, where $r(x)$ is the Riemannian distance of $x$ from a fixed origin and $F$ is positive, nondecreasing, and satisfies
$$
\int_1^\infty\frac{dt}{tF(t)}=+\infty\, ,
$$
then the surface is isometric to the Euclidean plane and the solution is a
standard bubble. The coefficient \(4\) is sharp, because it is precisely the
logarithmic decay rate of the Euclidean bubbles.

Our first main result is the $n-$dimensional quasilinear analogue of this
theorem.

\begin{theorem}\label{teo1}
	Let $(M^n,g)$, $n\ge2$, be a complete, connected, noncompact Riemannian
manifold with nonnegative Ricci curvature. Let $u$ be a weak solution of
	$$
	-\Delta_n u=e^u \quad \text{ in } M\,  ,
	$$
	such that
	$$
	u(x)\geq -\frac{n^2}{n-1}\log r(x) - \alpha\log F(r(x))\, , \quad  \text{ for } r(x)>1\,,
	$$
	and some arbitrary $\alpha\geq 0$, where $F(t)$ is a positive, nondecreasing function satisfying
	\begin{equation}\label{1}
		\int_1^{+\infty} \frac{1}{t F(t)}\, dt=+\infty.
	\end{equation}
	Then $(M,g)$ is isometric to $\mathbb{R}^n$ with the Euclidean metric and $u$ is given by \eqref{Tal_n}.
\end{theorem}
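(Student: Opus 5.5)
The plan is to reduce Theorem~\ref{teo1} to a finite-mass rigidity statement via an integral identity of Pohozaev/Serrin type. Three ingredients are needed: the lower bound on $u$ to control the geometry (volume growth), an integral estimate to get finite mass together with sharp decay of $\nabla u$, and then a rigidity argument using a P-function or vector field whose divergence involves $\mathrm{Ric}(\nabla u,\nabla u)$ and the traceless part of the Hessian-type tensor.

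\textbf{Step 1: mass and volume.} First we show $\int_M e^u\,dV_g<\infty$. Test the equation with cutoffs $\varphi^n$, $\varphi$ supported in $B_{2R}$, and use Young's inequality to get
\[
\int_{B_R} e^u\le C R^{-n}\,|B_{2R}\setminus B_R|\cdot\bigl(\text{weights}\bigr).
\]
A Moser-type argument with test functions $e^{\beta u}\varphi^{s}$ gives $\int_{B_R} e^{(1+\beta)u}\le C R^{-n(1+\beta)}\cdots$. Combined with Bishop--Gromov ($|B_R|\le \omega_nR^n$) this should yield finite mass. Conversely, the lower bound gives
\[
\int_{B_{2R}\setminus B_R}e^u\ge |B_{2R}\setminus B_R|\, R^{-\frac{n^2}{n-1}}F(2R)^{-\alpha}.
\]
Summing dyadically, finiteness of the mass forces growth restrictions. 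We aim to use the divergence condition \eqref{1} (a dyadic sum $\sum 1/F(2^k)=\infty$) to show that the volume ratio must be Euclidean in an averaged sense, and in particular positive: $\mathrm{AVR}(g)>0$.

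\textbf{Step 2: asymptotics.} With positive AVR, Sobolev and Harnack inequalities of Saloff-Coste type hold uniformly. Blow-down and Serrin-type estimates then give $u=-\gamma\log r+o(\log r)$ and $|\nabla u|\le C/r$, where $\gamma$ is determined by the mass through the $n$-Laplacian fundamental solution behaviour $\gamma^{n-1}\,\mathrm{AVR}\cdot n\omega_n=\int e^u$. The lower bound forces $\gamma\le n^2/(n-1)$, while integrability (mass finite with $|B_R|\sim R^n$) forces $\gamma\ge n^2/(n-1)$ up to the borderline logarithmic factor, which is handled precisely by \eqref{1}. Hence $\gamma=\frac{n^2}{n-1}$.

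\textbf{Step 3: rigidity.} Set $v=e^{-u/n}$ (or $v=e^{-\frac{n-1}{n^2}u}$), so that for the bubble $v$ is, up to constants, $1+\lambda^{\frac{n}{n-1}}r^{\frac{n}{n-1}}$. Let $w$ be the corresponding power making $\Delta_n$-type expressions ``$p$-harmonic radial'', and consider the vector field in the style of Serrin--Zou / Catino--Monticelli:
\[
X=e^{\theta u}\,|\nabla w|^{n-2}\bigl(\nabla(|\nabla w|^{n-2}\nabla w)\bigr)\cdot\ldots
\]
whose divergence is, by the Bochner formula for the $n$-Laplacian, a sum of nonnegative terms: $\mathrm{Ric}(\nabla w,\nabla w)$ and $|\text{traceless }A|^2$ with $A=\nabla(|\nabla w|^{n-2}\nabla w)$. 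Integrating over $B_R$ with cutoffs and using the decay from Step 2, the boundary terms vanish as $R\to\infty$; this is exactly where the sharp exponent is needed. We conclude that $A=\frac{\Delta_n w}{n}g$. Then $|\nabla w|^{n-2}\nabla w$ is a conformal gradient-type field with constant trace, and as in \cite{CM,CMR} we obtain $\nabla^2 f=c\,g$ for a suitable function $f$ of $w$. By the standard argument (Tashiro/Obata-type), $(M,g)$ is isometric to $\mathbb R^n$ and $u$ is radial about the minimum point. Solving the ODE then gives \eqref{Tal_n}.

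\textbf{Main obstacle.} We expect the hardest part to be Step 2 and the boundary terms in Step 3 at the borderline rate, when only the weak logarithmic lower bound with the $F$-correction is available. There, the integral estimates must be balanced against \eqref{1} through a dyadic/ODE comparison argument, in the spirit of Ou's two-dimensional proof.
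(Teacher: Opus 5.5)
There is a genuine gap. Steps 1--2 rely on finite mass and on $\operatorname{AVR}>0$, and your proposal establishes neither.

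\emph{Finite mass.} Testing with $e^{\beta u}\varphi^s$, $\beta>0$, puts $\beta\int|\nabla u|^n e^{\beta u}\varphi^s$ on the same side as $\int e^{(1+\beta)u}\varphi^s$, so it cannot be absorbed. Only negative exponents work. They give $\int e^{(1-\gamma)u}\varphi^n\le C_\gamma\int e^{-\gamma u}|\nabla\varphi|^n$ with $\gamma>0$; this is Lemma~\ref{lem_est} with $q=n(1-\gamma)$. Under your hypothesis the right-hand side grows like $R^{\gamma n^2/(n-1)}F(2R)^{\alpha\gamma}$, and $C_\gamma\to\infty$ as $\gamma\to0$. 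No bound on $\int_M e^u$ comes out, and finite mass is a consequence of the theorem rather than an available input.

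\emph{Positive volume ratio.} Your route goes the wrong way. Comparing $\int_{B_{2R}\setminus B_R}e^u\ge|B_{2R}\setminus B_R|R^{-n^2/(n-1)}F(2R)^{-\alpha}$ with a finite mass only bounds volume growth from \emph{above}. The paper gets $\operatorname{AVR}>0$ by a capacity argument (Proposition~\ref{prop_log_lb}), and does not need it for this theorem.

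\emph{Asymptotics.} In Step 2, the expansion $u=-\gamma\log r+o(\log r)$ with $\gamma^{n-1}\theta\sigma_{n-1}=\int e^u$ is asserted on a manifold whose tangent cones at infinity need not be unique. The paper proves only the one-sided inequality of Lemma~\ref{lem_cle}. Moreover, ``integrability forces $\gamma\ge\frac{n^2}{n-1}$'' is false: finite mass with Euclidean volume growth only requires $\gamma>n$. The bound $\frac{n^2}{n-1}$ comes from the level-set argument with the sharp isoperimetric inequality (Lemma~\ref{lem_mlb}). Once that is in play, equality already yields rigidity, as in the proof of Theorem~\ref{teo2}, so your Step 3 would be redundant. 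Note that \eqref{1} does imply $\liminf_{R\to\infty}\log F(R)/\log R=0$. With finite mass you could therefore run that argument along a sequence of radii, so a priori finite mass is precisely the missing ingredient.

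The paper's proof needs no finite mass, no volume lower bound and no asymptotics. Set $w=e^{-u/n}$, $\mathbf v=|\nabla w|^{n-2}\nabla w$ and $G=\Delta_n w$. Lemma~\ref{lem_ineq} gives
$\operatorname{div}(G^{-b}w^{1-n}\mathbf v\cdot\mathring{\mathbf V})\ge(1-b)G^{-b}w^{1-n}|\mathring{\mathbf V}|^2$ for every $b\in[0,1)$. The Karp-type Lemma~\ref{lem_karp} then absorbs the boundary term via $|T|^2\le C_0hE$, so it only needs the integral growth $\int_{B_R}h\le CR^2F(R)$. It does not need pointwise control of the second-order quantity $\mathring{\mathbf V}$, which your $o(\log r)$ and $|\nabla u|\le C/r$ would not provide anyway. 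That growth follows from the pointwise bound $w\le Cr^{n/(n-1)}F(r)^{\alpha/n}$, Bishop--Gromov, Lemma~\ref{lem_est} and H\"older. Choosing $b=1-\frac{n}{\alpha(n-1)}$ makes the exponent of $F$ exactly $1$, and this is how an arbitrary $\alpha\ge0$ is handled. Then $\mathring{\mathbf V}\equiv0$, $G$ is constant, $\frac{n}{G}\mathbf v$ is a concurrent vector field, and Tashiro's theorem gives $\mathbb R^n$. Your Step 3 leaves the vector field unspecified and has no analogue of this tuning of the weight to $\alpha$; the claim that the borderline factor is ``handled precisely by \eqref{1}'' is asserted, not shown.
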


Thus the Euclidean logarithmic decay rate is not only the asymptotic behaviour
of the explicit solutions, but also the sharp rigidity threshold on complete
manifolds with nonnegative Ricci curvature. The coefficient $\frac{n^2}{n-1}$ is optimal. More precisely, for every $\delta>\frac{n^2}{n-1}$, we construct complete nonflat rotationally symmetric manifolds with
nonnegative Ricci curvature supporting radial solutions whose logarithmic decay has coefficient strictly between $\frac{n^2}{n-1}$ and $\delta$. Therefore
the leading coefficient in the lower bound cannot be replaced by any larger
one. This result improves the result in \cite{Sun-Wang}.

We also prove that logarithmic lower bounds have an intrinsic geometric
consequence. We denote by
$$
\operatorname{AVR}(M,g)
:=
\lim_{R\to+\infty}
\frac{\operatorname{Vol}(B_R(o))}{\omega_nR^n}
$$
the \emph{asymptotic volume ratio} of $M$, where $B_R(o)$ denotes the geodesic ball of radius $R$ centered at $o\in M$ and $\omega_{n}:=|\mathbb B^{n}_1|$ is the volume of the unit ball in the Euclidean space. By Bishop--Gromov theorem, this limit
exists and is independent of the base point $o$. We also let $\sigma_{n-1}:=|\mathbb S^{n-1}|$ be the $(n-1)$-dimensional Hausdorff measure of the unit sphere in the Euclidean space, so that $n\omega_n=\sigma_{n-1}$.

\begin{proposition}\label{prop_log_lb}
Let $(M^n,g)$, $n\ge2$, be a complete, connected, noncompact Riemannian
manifold with nonnegative Ricci curvature. Let $u$ be a weak solution of
$$
-\Delta_n u=e^u\qquad\text{in }M\, .
$$
Assume that there exist constants $\beta>0$ and $R_0>0$ such that
\begin{equation}\label{41}
u(x)\ge -\beta\log r(x)\qquad\text{for }r(x)\ge R_0,
\end{equation}
for some fixed point $o\in M$. Then
$$
\theta:=\operatorname{AVR}(M,g)>0.
$$
Moreover, $M$ has only one end.
\end{proposition}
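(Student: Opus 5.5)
The plan is to prove $\theta>0$ by contradiction, using a weighted integral identity obtained by testing the equation against $e^{-\varepsilon u}\varphi^n$. Here $\varphi$ is a logarithmic cutoff and $\varepsilon$ is tied to the scale. One-endedness then follows from the splitting theorem.

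\emph{Step 1: integral inequality.} Fix $\varepsilon\in(0,1)$ and a Lipschitz cutoff $\varphi$ with compact support, $0\le\varphi\le1$. Since $u\in W^{1,n}_{loc}\cap L^\infty_{loc}$, the function $e^{-\varepsilon u}\varphi^n$ is an admissible test function, and we get
$$
\int_M e^{(1-\varepsilon)u}\varphi^n+\varepsilon\int_M|\nabla u|^ne^{-\varepsilon u}\varphi^n
= n\int_M \varphi^{n-1}e^{-\varepsilon u}|\nabla u|^{n-2}g(\nabla u,\nabla\varphi).
$$
Apply Young's inequality to the right-hand side, absorbing the term $\tfrac{\varepsilon}{2}\int|\nabla u|^ne^{-\varepsilon u}\varphi^n$ into the left. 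This gives
$$
\int_M e^{(1-\varepsilon)u}\varphi^n\le C_n\,\varepsilon^{1-n}\int_M e^{-\varepsilon u}|\nabla\varphi|^n .
$$
The constant's dependence $\varepsilon^{1-n}$ is exactly what the rest of the argument has to beat.

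\emph{Step 2: choice of cutoff and of $\varepsilon$.} For large $R$, take $\varphi=1$ on $B_R(o)$ and $\varphi=0$ outside $B_{R^2}(o)$. In between set $\varphi=\log(R^2/r)/\log R$, so that $|\nabla\varphi|\le 1/(r\log R)$. Choose $\varepsilon=1/\log(R^2)$.

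On the support of $\nabla\varphi$ we have $r\le R^2$, so hypothesis \eqref{41} gives $e^{-\varepsilon u}\le r^{\varepsilon\beta}\le e^{\beta}$. The left-hand side is bounded below by $c_0:=\int_{B_1(o)}e^{(1-\varepsilon)u}$. Since $u$ is locally bounded, $c_0\ge c>0$ uniformly for $\varepsilon\le1/2$.

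For the right-hand side, set $V(t)=\operatorname{Vol}(B_t(o))$ and use the coarea formula and integration by parts. This yields
$$
\int_M e^{-\varepsilon u}|\nabla\varphi|^n\le \frac{e^\beta}{(\log R)^n}\int_R^{R^2}\frac{dV(t)}{t^n}\le \frac{C}{(\log R)^n}\Big(\sup_{t\ge R}\frac{V(t)}{t^n}\Big)\big(1+\log R\big).
$$
Multiplying by $\varepsilon^{1-n}\sim(\log R)^{n-1}$, the logarithms cancel exactly. We obtain
$$
c\le C\sup_{t\ge R}\frac{V(t)}{t^n}.
$$
If $\operatorname{AVR}(M,g)=0$, Bishop--Gromov monotonicity makes the supremum tend to $0$ as $R\to\infty$, which is a contradiction. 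Hence $\theta>0$.

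The main obstacle is precisely this balancing. A fixed $\varepsilon$ only yields $V(R)\gtrsim R^{n-\varepsilon\beta}$, and a linear cutoff with $\varepsilon=1/\log R$ still loses a factor $(\log R)^{n-1}$. It is the combination of the logarithmic cutoff with $\varepsilon\sim1/\log R$ that removes the loss. The point needing care is the boundary term in the integration by parts of $\int t^{-n}dV$, which must be controlled by $V(t)/t^n$.

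\emph{Step 3: one end.} Suppose $M$ had at least two ends. Then $M$ contains a line, and the Cheeger--Gromoll splitting theorem gives an isometry $M\cong\mathbb R\times N$. Since $M$ has two ends, $N$ is compact. Consequently $V(R)\le C R$, so $\operatorname{AVR}(M,g)=0$ because $n\ge2$. This contradicts Step 2, so $M$ has only one end.
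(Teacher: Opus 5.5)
Your argument is correct, but its analytic core differs from the paper's. The paper does not test the equation directly. It first proves a general capacitary estimate (Lemma \ref{lem_cap}) by testing $\Delta_n z=e^u$, with $z=-u$, against $\varphi^n(\sup_{B_R}z-z+\varepsilon)^{1-n}$. For this weight the pointwise inequality $na^{n-1}b-(n-1)a^n\le b^n$ is lossless and leaves the unweighted energy $\int|\nabla\varphi|^n$ on the right. The paper then uses $\sup_{\partial B_R}z\le\beta\log R$, via the maximum principle of Remark \ref{rem1}, to get $\operatorname{Cap}_n(K,B_R(o))\ge C(\log R)^{1-n}$. This contradicts the logarithmic-cutoff upper bound, which is $o\big((\log R)^{1-n}\big)$ when $\operatorname{AVR}=0$.

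In your version, the weight $e^{-\varepsilon u}$ with $\varepsilon=1/(2\log R)$ sits on the right-hand side instead. The Young loss $\varepsilon^{1-n}\sim(\log R)^{n-1}$ plays exactly the role of the factor $(\sup z-\inf_K z)^{n-1}\sim(\beta\log R)^{n-1}$ in the paper. The hypothesis enters only pointwise on the annulus $R\le r\le R^2$, where $\nabla\varphi\neq0$. So you need neither the capacity lemma nor any control of $u$ inside $B_R(o)$ beyond local boundedness on $B_1(o)$. You also get the quantitative bound $V(R)/R^n\ge c>0$ directly rather than by contradiction.

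What the paper's route buys is sharpness. Lemma \ref{lem_cap} loses no constant, and the same lemma is reused in Lemma \ref{lem_cle} to obtain the exact rate $\big(\mathcal M/(\theta\sigma_{n-1})\big)^{1/(n-1)}$. That rate must match the isoperimetric bound in Theorem \ref{teo2}. Your estimate, as written, carries a dimensional constant, coming from absorbing only half of the gradient term and from fixing $\varepsilon\log R=\tfrac12$. This is harmless for $\theta>0$, but it would not give that sharp rate without further optimization. Your one-end argument via the Cheeger--Gromoll splitting theorem is the same as the paper's.
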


Under the hypotheses of Proposition \ref{prop_log_lb} by Bishop-Gromov volume comparison theorem we have
$$
\theta\omega_nR^n\leq\operatorname{Vol}(B_R(o))\leq\omega_nR^n\, ,
$$
for every $o\in M$ and  $R>0$. In dimension two, Proposition \ref{prop_log_lb} recovers the conformal-type conclusion of \cite[Theorem 1.2]{Ou}. Indeed, by the classical results by Cohn--Vossen \cite{CoVo} and Huber \cite{Hub}, a complete noncompact surface with nonnegative Gaussian curvature is either conformally equivalent to the Euclidean plane or belongs to one of the flat linear-growth alternatives, namely the flat cylinder and, in the nonorientable case, the flat open M\"{o}bius strip. The positive asymptotic volume ratio given by Proposition \ref{prop_log_lb} rules out these flat
linear-growth alternatives. Hence the surface is conformally equivalent to $(\mathbb R^2,g_{\mathrm{eucl}})$; see also \cite[Theorem~1.2]{Ou}.
In dimensions $n\ge3$, no analogous conformal classification follows from $\operatorname{Ric}\ge0$ alone. Thus Proposition \ref{prop_log_lb} only gives
a volume-growth and the one-endedness of $M$ in higher dimension.

Our third result is a \emph{finite-mass rigidity theorem.}

\begin{theorem}\label{teo2}
Let $(M^n,g)$, $n\ge2$, be a complete, connected, noncompact Riemannian
manifold with nonnegative Ricci curvature. Let $u$ be a weak solution of
$$
-\Delta_n u=e^u\qquad\text{in }M,
$$
such that
\begin{equation}\label{40}
\mathcal{M}:=\int_M e^u\,dV_g<+\infty.
\end{equation}
Assume moreover that
\begin{equation}\label{37}
u(x)\ge-\frac{n^2}{n-1}\log r(x)+o(\log r(x))\qquad\text{as }r(x)\to+\infty\,.
\end{equation}
Then $(M,g)$ is isometric to $\mathbb R^n$ with the Euclidean metric and $u$ is given by \eqref{Tal_n}.
\end{theorem}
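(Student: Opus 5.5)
The plan is to reduce Theorem \ref{teo2} to the Euclidean classification of Esposito~\cite{Esp}. The rigidity of $(M,g)$ will come from a sharp isoperimetric/Pohozaev-type argument that uses the finite mass $\mathcal M$. Proposition \ref{prop_log_lb} applies, since \eqref{37} implies $u\ge-\beta\log r$ for any $\beta>\frac{n^2}{n-1}$ and $r$ large. So $\theta=\operatorname{AVR}(M,g)>0$ and $M$ has one end. I then have the sharp isoperimetric inequality on manifolds with $\operatorname{Ric}\ge0$ and $\theta>0$ (Brendle, Agostiniani--Fogagnoli--Mazzieri, Balogh--Krist\'aly):
$$
|\partial\Omega|\ge n\,\omega_n^{1/n}\theta^{1/n}|\Omega|^{\frac{n-1}{n}},
$$
with equality only in the flat case.

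\emph{Step 1: asymptotics.} From finite mass and $\theta>0$ I derive the upper bound $u\le -\gamma\log r+C$, with $\gamma=\frac{\mathcal M^{1/(n-1)}}{(\theta\sigma_{n-1})^{1/(n-1)}}$ in the limit. The tool is a Brezis--Merle/Serrin-type estimate for the $n$-Laplacian: away from a compact set, $u$ behaves like $-\bigl(\mathcal M/(\theta\sigma_{n-1})\bigr)^{1/(n-1)}\log r$. Comparing this with the lower bound \eqref{37} and the finite mass gives the condition $\int^\infty r^{-\gamma+n-1}\,dr$-type integrability, hence the inequality $\mathcal M\le\theta\sigma_{n-1}\bigl(\tfrac{n^2}{n-1}\bigr)^{n-1}$.

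\emph{Step 2: the reverse inequality.} I apply the Talenti-type rearrangement argument of Esposito/Chen--Li on the level sets $\{u>t\}$. Set $\mu(t)=|\{u>t\}|$ and $m(t)=\int_{\{u>t\}}e^u$. By coarea, H\"older and the isoperimetric inequality,
$$
m(t)=\int_{\{u=t\}}|\nabla u|^{n-1}\ge \frac{|\{u=t\}|^{n}}{(-\mu'(t))^{n-1}}\ge\frac{(n^n\omega_n\theta)^{\,}\mu^{n-1}}{(-\mu')^{n-1}},
$$
together with $m'=e^t\mu'$. Integrating this ODE inequality over $t\in(-\infty,\max u)$ gives $\mathcal M\ge\theta\sigma_{n-1}\bigl(\tfrac{n^2}{n-1}\bigr)^{n-1}$, which is the Euclidean total mass scaled by $\theta$. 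Combined with Step 1, equality holds throughout. If $u$ is unbounded above or has no maximum, I first need a local $L^\infty$ bound, which I expect from the Serrin--Harnack estimates.

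\emph{Step 3: equality.} Equality forces almost every superlevel set $\{u>t\}$ to attain equality in the sharp isoperimetric inequality. By the rigidity part of that inequality, these sets are balls and $(M,g)$ is isometric to $\mathbb R^n$. Then $\theta=1$, and Esposito's theorem~\cite{Esp} gives $u=\mathcal U_{\lambda,x_0}$.

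The main obstacle I expect is Step 1. On a manifold one must show that the finite-mass solution has exact logarithmic decay, with coefficient tied to $\mathcal M$ and $\theta$ but not to the full geometry. This requires a Kichenassamy--V\'eron-type analysis at infinity for the $n$-Laplacian together with the asymptotically conical behaviour of large annuli implied by $\theta>0$, e.g.\ via a comparison with the $n$-Green function $G\sim c\log r$ using the $o(\log r)$ slack in \eqref{37}. If exact asymptotics fail, I would try instead to get Step 1's inequality directly by testing the equation with cutoffs $\varphi=\min\{1,\log(R^2/r)/\log R\}_+$. That yields $\mathcal M\le\liminf (\text{flux through }\partial B_R)$, and I would bound the flux using the lower bound \eqref{37} and the Bishop--Gromov volume growth.
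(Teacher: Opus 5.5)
\textbf{Verdict: genuine gap in Step~1.} The overall architecture matches the paper's. Proposition~\ref{prop_log_lb} gives $\theta>0$. Your Step~2 is exactly Lemma~\ref{lem_mlb}; the constants agree, and no $L^\infty$ bound is needed, since you can integrate over all $t\in\mathbb R$ using $m(t)\to\mathcal M$ as $t\to-\infty$ and $m(t)\to0$ as $t\to+\infty$. Step~3 is the paper's conclusion.

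The gap is Step~1, the upper bound $\mathcal M\le\theta\sigma_{n-1}\bigl(\frac{n^2}{n-1}\bigr)^{n-1}$. This is half of the theorem, and you do not actually prove it.
\begin{itemize}
\item Your primary route asserts a sharp pointwise bound $u\le-\gamma\log r+C$ from a Kichenassamy--V\'eron type analysis at infinity. No such analysis is available on a general manifold with $\operatorname{Ric}\ge0$ and $\theta>0$; tangent cones at infinity need not even be unique. It is also far more than is needed.
\item The integrability remark does not produce the inequality. Finite mass against a decay rate $r^{-\gamma}$ only encodes $\gamma>n$. What is needed is a direct comparison of a mass-forced decay rate with \eqref{37}.
\item Your fallback does not close the gap either. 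Testing the equation with the logarithmic cutoff only gives $\int e^u\varphi=\frac{1}{\log R}\int_{B_{R^2}\setminus B_R}|\nabla u|^{n-2}\langle-\nabla u,\nabla r\rangle\,r^{-1}$. A pointwise lower bound on $u$ gives no control of $|\nabla u|^{n-1}$ on the annulus, let alone with a sharp constant.
\item Bishop--Gromov alone ($V(t)\le\omega_nt^n$) loses the factor $\theta$, which must appear exactly to match Step~2.
\end{itemize}

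\textbf{The missing idea} is the capacitary estimate of Lemma~\ref{lem_cap}. It needs only one-sided information on $\inf_{\partial B_R}u$.
\begin{itemize}
\item Set $z=-u$, which is $n$-subharmonic, and $v=\sup_{\partial B_R}z-z+\varepsilon$. By the weak maximum principle $v\ge\varepsilon$ on $B_R$, and $-\Delta_nv=e^u$.
\item Test with $\varphi^nv^{1-n}$ and apply the pointwise inequality $na^{n-1}b-(n-1)a^n\le b^n$ with $a=\varphi|\nabla v|/v$ and $b=|\nabla\varphi|$. This removes $\nabla u$ completely and gives $\int\varphi^nv^{1-n}e^u\le\int|\nabla\varphi|^n$.
\item On $B_\rho$ one has $\varphi=1$ and $v\le\sup_{\partial B_R}z-\inf_{B_\rho}z+\varepsilon$. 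Hence $\int_{B_\rho}e^u\le\operatorname{Cap}_n(B_\rho,B_R)\bigl(\sup_{\partial B_R}z-\inf_{B_\rho}z\bigr)^{n-1}$.
\item Your logarithmic cutoff is exactly the right competitor for this capacity. Use $V(t)\le(\theta+\varepsilon)\omega_nt^n$ for large $t$; this is the asymptotic volume ratio, not just monotonicity. It gives $\operatorname{Cap}_n(B_\rho,B_R)\le(\theta+\varepsilon)\sigma_{n-1}\bigl(\log\frac{R}{\rho}\bigr)^{1-n}$.
\item Let $R\to\infty$, then $\rho\to\infty$ and $\varepsilon\to0$. This yields $\liminf_{R\to\infty}\frac{-\inf_{\partial B_R}u}{\log R}\ge\bigl(\mathcal M/(\theta\sigma_{n-1})\bigr)^{1/(n-1)}$, which is Lemma~\ref{lem_cle}.
\item Meanwhile \eqref{37} bounds the left-hand side by $\frac{n^2}{n-1}$.
\end{itemize}
That is the upper bound you need. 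Your Steps~2 and~3 then finish the proof.
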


This result should be viewed as a curved analogue of Esposito's Euclidean
classification~\cite{Esp}, but with an essential additional asymptotic condition. Indeed, unlike in the Euclidean case, finite mass alone does not imply rigidity on
curved manifolds, even under nonnegative Ricci curvature and positive
asymptotic volume ratio. The asymptotically conical Example \ref{ex_sharpness} constructed in Section~\ref{sec3} have positive asymptotic volume ratio and carry
finite-mass solutions, but the underlying manifolds are nonflat. Thus the sharp lower bound in Theorem~\ref{teo2} is necessary: it selects the Euclidean logarithmic rate and excludes nonflat conical models.

The logarithmic lower bound in Proposition \ref{prop_log_lb} cannot be dropped
in general. Indeed, on a \emph{product cylinder}
$$
(N^{n-1}\times\mathbb R,g_N+dt^2)\, ,
$$
with $N$ compact and $\operatorname{Ric}_{g_N}\ge0$, one can construct
finite-mass solutions of
$$
-\Delta_n u=e^u
$$
depending only on the $\mathbb R$-variable and satisfying
$$
u(y,t)\sim-a|t|\, , \quad\text{as } |t|\to+\infty\,  ,
$$
for some $a>0$. Thus solutions may exist on cylindrical manifolds with
$\operatorname{Ric}\ge0$, but their decay is faster than logarithmic. The
logarithmic lower bound is precisely what rules out this cylindrical behaviour. See Example \ref{exe2}.

A similar construction in $\mathbb{R}^n$ yields examples of solutions of the $n-$Liouville equation in the Euclidean space with infinite mass that have
non-logarithmic and anisotropic behavior at infinity. See Example \ref{exe3}.

In Example \ref{exe4} we construct complete rotationally symmetric manifolds with nonnegative Ricci curvature and zero asymptotic volume ratio that support finite-mass solutions, which decay only slightly faster than logarithmic.

The proof of Theorem~\ref{teo1} is based on a quasilinear $P-$function method. We introduce the auxiliary function
$$
w=e^{-u/n}\, .
$$
Then $w$ satisfies
$$
\Delta_nw
=
(n-1)\frac{|\nabla w|^n}{w}
+
\frac{1}{n^{n-1}w}
=:G\, .
$$
We associate to $w$ the nonlinear vector field
$$
\mathbf{v}=|\nabla w|^{n-2}\nabla w\,  ,
$$
and consider the traceless part $\mathring{\mathbf V}$ of $\nabla \mathbf{v}$. A
Bochner-type computation, together with a sharp algebraic inequality, yields
the weighted distributional inequality
$$
\operatorname{div}
\left(
G^{-b}w^{1-n}\mathbf{v}\cdot\mathring{\mathbf V}
\right)\ge
(1-b)G^{-b}w^{1-n}|\mathring{\mathbf V}|^2 \, ,  \quad \text{ for every } b\in[0,1)\, .
$$
The freedom in the parameter \(b\) is crucial in order
to treat arbitrary powers of the auxiliary function \(F\). A Karp-type
Liouville theorem~\cite{karp} then forces
$$
\mathring{\mathbf V}\equiv0\, .
$$
The resulting homothetic structure leads to the Euclidean rigidity through a classical theorem of Tashiro~\cite{tashiro}.

The finite-mass Theorem \ref{teo2} is proved by a different argument. The first ingredient
is the sharp isoperimetric inequality on complete manifolds with nonnegative
Ricci curvature and positive asymptotic volume ratio, due to Brendle~\cite{Bre}
and Balogh--Krist\'aly~\cite{BaKr}. The second ingredient is a nonlinear
capacitary estimate for the $n-$Laplacian, which replaces the classical
logarithmic potential estimates available in dimension two. Combining these two
estimates gives a sharp lower bound on the logarithmic decay of finite-mass
solutions. Under the Euclidean critical lower bound, equality must occur in the
sharp isoperimetric inequality, forcing the ambient manifold to be Euclidean.

Finally, we complement the rigidity theorems with examples showing that the
assumptions are sharp. On product cylinders $N^{n-1}\times\mathbb R$, with
$N$ compact and $\operatorname{Ric}_N\ge0$, we construct finite-mass
solutions depending only on the $\mathbb R-$variable and satisfying
$$
u(y,t)\sim -a|t|\, ,
\quad\text{as } |t|\to+\infty.
$$
These solutions decay faster than logarithmically and live on manifolds with
zero asymptotic volume ratio. We also construct rotationally symmetric examples
with zero asymptotic volume ratio carrying finite-mass solutions whose decay is
super-logarithmic but slower than any positive power. These examples show that
the logarithmic scale is the borderline between positive volume growth and
collapsing-volume geometries.

The rest of the paper is organized as follows. In Section~\ref{sec:diff-ineq}
we derive the weighted differential inequality for the nonlinear $P-$function.
In Section~\ref{sec3} we prove the main rigidity theorem and construct the
sharpness examples for the leading coefficient. In Section~\ref{sec:finite-mass}
we prove the positive asymptotic volume ratio criterion, the finite-mass
rigidity theorem, and the examples with faster-than-logarithmic decay. In Appendix~\ref{app:karp} we record a weighted Karp-type criterion,
adapted to our applications, showing that a locally weighted subsolution
with borderline annular growth has identically vanishing weighted energy.

\

\section{A differential inequality}\label{sec:diff-ineq}

Let $u$ be a solution to \eqref{eq_crit}, we define the auxiliary function
\begin{equation}\label{w}
w=e^{-u/n}\, .
\end{equation}
It is easy to see that $w\in W^{1,n}_{loc}(M)\cap L^{\infty}_{loc}(M)$.
A direct computation shows that
\begin{equation}\label{G}
\Delta_n w=(n-1)\frac{|\nabla w|^n}{w}+ \frac{1}{n^{n-1}w}=:G\, .
\end{equation}
Moreover, we define the following vector field
$$
\mathbf{v}=|\nabla w|^{n-2}\nabla w\, ,
$$
and
$$
\mathbf{V}=\begin{cases}
\nabla \mathbf{v} & \text{in } M\setminus\Omega_{cr}  \\
0 & \text{in } \Omega_{cr}\, ,
\end{cases}
$$
where
$$
\Omega_{cr}:=\{x\in M:\nabla u(x)=0\}=\{x\in M:\nabla w(x)=0\}\, .
$$
We also define
$$
\mathring{\mathbf{V}}=\mathbf{V} - \frac{\mathrm{tr}(\mathbf{V})}{n}g\, ,
$$
the trace-less tensor of $\mathbf{V}$.

The differential inequality below is in the same spirit as the
$P$-function identities used in recent classification results for critical
quasilinear and $n$-Liouville equations; see
\cite{CirEspLi,CFP,Sun-Wang}.

\begin{lemma}
\label{lem_ineq}
Let $(M,g)$ be a Riemannian manifold with $\operatorname{Ric}\ge0$, and let
$u$ be a weak solution of \eqref{eq_crit}. Then, for every $b\in[0,1)$,
$$
\operatorname{div}\left(G^{-b}w^{1-n}\mathbf{v}\cdot\mathring{\mathbf{V}}\right)\geq (1-b)G^{-b}w^{1-n}|\mathring{\mathbf{V}}|^2
$$
in the sense of distributions on $M$.
\end{lemma}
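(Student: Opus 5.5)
First I will carry out the computation at points where $\nabla w\neq0$, where $u$ (and hence $w$) is $C^{2,\alpha}$ by standard regularity for the nondegenerate quasilinear equation. Afterwards I will pass to the distributional statement on all of $M$ through an approximation argument. Note also that $G\ge \frac{1}{n^{n-1}w}>0$ everywhere, so $G^{-b}$ is locally bounded and locally Lipschitz wherever $w$ is.

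The pointwise computation proceeds as follows. Set $\mathbf{v}=|\nabla w|^{n-2}\nabla w$ and $\mathbf V=\nabla\mathbf v$, so that $\mathrm{tr}\,\mathbf V=\Delta_n w=G$. Commuting derivatives gives $\operatorname{div}(\mathbf V)_i=\nabla_i G+\operatorname{Ric}(\mathbf v,\cdot)_i$ up to the correction coming from asymmetry of $\mathbf V$. Using the identity $\operatorname{div}(\mathbf v\cdot\mathbf V)=\mathbf V^T:\mathbf V+\mathbf v\cdot\operatorname{div}\mathbf V$, I obtain
\[
\operatorname{div}(\mathbf v\cdot\mathring{\mathbf V})=\mathbf V:\mathring{\mathbf V}^{T}+\operatorname{Ric}(\mathbf v,\mathbf v)|\nabla w|^{-(n-2)}\cdot(\text{positive factor})+\tfrac{n-1}{n}\,\mathbf v\cdot\nabla G .
\]
Here I will use the standard fact that $|\nabla w|^{n-2}\nabla^2 w$ composed with the $\mathbf v$ structure gives $\mathbf V_{ij}=|\nabla w|^{n-2}(w_{ij}+(n-2)\frac{w_iw_kw_{kj}}{|\nabla w|^2})$. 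A sharp algebraic inequality of the type used in \cite{CFP,Sun-Wang} gives $\mathbf V:\mathring{\mathbf V}^T\ge|\mathring{\mathbf V}|^2$, possibly after using the symmetry of $\nabla^2 w$.

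The next step is to compute $\nabla G$ explicitly from \eqref{G}:
\[
\nabla G=(n-1)\Big(\frac{n|\nabla w|^{n-2}\nabla^2w(\nabla w)}{w}-\frac{|\nabla w|^n\nabla w}{w^2}\Big)-\frac{\nabla w}{n^{n-1}w^2}.
\]
I will then express $\mathbf v\cdot\nabla G$ in terms of $\mathbf v\cdot\mathbf V\cdot\mathbf v$ and $G$. After multiplying by the weight $G^{-b}w^{1-n}$, the terms $\nabla(G^{-b}w^{1-n})\cdot(\mathbf v\cdot\mathring{\mathbf V})=-bG^{-b-1}w^{1-n}\nabla G\cdot\mathring{\mathbf V}\mathbf v+(1-n)G^{-b}w^{-n}\nabla w\cdot\mathring{\mathbf V}\mathbf v$ appear. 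The exponent $1-n$ of $w$ is chosen precisely so that the $w$-derivative term cancels against the part of $\frac{n-1}{n}\mathbf v\cdot\nabla G$ involving $\mathring{\mathbf V}(\mathbf v,\mathbf v)$. This should leave
\[
\operatorname{div}(\cdots)\ge G^{-b}w^{1-n}\Big(|\mathring{\mathbf V}|^2-bG^{-1}\,\nabla G\cdot\mathring{\mathbf V}\mathbf v\Big).
\]
Since $\nabla G$ is, modulo multiples of $\nabla w$, proportional to $\frac{n(n-1)}{w}\mathring{\mathbf V}\mathbf v$ plus trace parts, and since $G\ge (n-1)|\nabla w|^n/w$, I will bound $G^{-1}|\nabla G\cdot\mathring{\mathbf V}\mathbf v|\le|\mathring{\mathbf V}|^2$ by Cauchy--Schwarz, using $|\mathring{\mathbf V}\mathbf v|^2\le\frac{n-1}{n}|\mathring{\mathbf V}|^2|\mathbf v|^2$ for traceless tensors. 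This yields the factor $(1-b)$. I expect this bookkeeping, namely getting exactly the constant $1$ in front of $b$, to be the main obstacle. If the Cauchy--Schwarz route does not close, I would instead try completing a square using the precise form of $G$.

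Finally comes the distributional extension. On $\Omega_{cr}$ we have $\mathbf V=0$ by definition, and $\mathbf v\in W^{1,2}_{loc}$ with $\nabla\mathbf v=0$ a.e. on $\{\nabla w=0\}$, by the known regularity $|\nabla w|^{n-2}\nabla w\in W^{1,2}_{loc}$ for $n$-Laplace type equations with locally bounded right-hand side (Lou, Cianchi--Maz'ya). To handle the singular set, I will test against $\varphi\,\psi_\varepsilon(|\nabla w|)$ with a cutoff $\psi_\varepsilon$ vanishing near $0$. Alternatively, I will regularize the operator as $\operatorname{div}((\varepsilon+|\nabla w_\varepsilon|^2)^{\frac{n-2}{2}}\nabla w_\varepsilon)$ and pass to the limit using local $C^{1,\alpha}$ bounds and the $W^{1,2}$ bound on $\mathbf v$. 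The term produced by $\nabla\psi_\varepsilon$ will have a good sign or vanish in the limit, and Fatou's lemma will give the inequality for the right-hand side.
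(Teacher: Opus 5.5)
Your plan follows the paper's route: a Bochner identity for $\mathbf v$, an exact cancellation produced by the weight $w^{1-n}$, control of the $G^{-b}$ term through $G\ge (n-1)|\nabla w|^n/w$ plus a traceless inequality, and an extension across $\Omega_{cr}$. Your extension step is fine. On $\{\varepsilon<|\nabla w|<2\varepsilon\}$ the error from $\nabla\psi_\varepsilon(|\nabla w|)$ is pointwise bounded by $C(1+|\mathbf V|^2)$, since $|\nabla^2 w|\le|\nabla w|^{2-n}|\mathbf V|$, so it vanishes as $\varepsilon\to0$ because $\mathbf V\in L^2_{loc}$; this is more than the paper writes.

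The algebraic core, however, has two genuine gaps. First, the inequality $\mathbf V:\mathring{\mathbf V}^T\ge|\mathring{\mathbf V}|^2$ is false. For any matrix $B$ one has $\operatorname{tr}(B^2)=\langle B,B^T\rangle\le|B|^2$, with equality only if $B$ is symmetric. Here $\mathbf V=\nabla^2 w\,A$, with $A=|\nabla w|^{n-2}(g+(n-2)\nu\otimes\nu)$ and $\nu=\nabla w/|\nabla w|$, and this is not symmetric for $n\ge3$ unless $\nabla^2w(\nabla w)\parallel\nabla w$. (The paper writes its Bochner formula with $|\mathbf V|^2$ without discussing this asymmetry, so your concern is legitimate, but your fix runs the wrong way.) What is true is that $\mathbf V=A^{-1/2}SA^{1/2}$, with $S:=A^{1/2}\nabla^2w\,A^{1/2}$ symmetric and $\operatorname{tr}S=G$. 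Hence your divergence identity holds exactly, with Ricci term $\operatorname{Ric}(\mathbf v,\mathbf v)$ and $\mathbf V:\mathring{\mathbf V}^T=\operatorname{tr}(\mathring{\mathbf V}^2)=|\mathring S|^2\ge0$. The quantity $|\mathring S|^2$ is what must play the role of $|\mathring{\mathbf V}|^2$ throughout.

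Second, you never pin down $\nabla G$, and the $b$-term cannot be closed without the exact formula. Differentiating $wG=(n-1)|\nabla w|^n+n^{1-n}$ gives, with no remainder (the term $-\frac Gw\nabla w$ is exactly the trace correction),
\[
\nabla G=\frac nw|\nabla w|^{2-n}\,\mathring{\mathbf V}\mathbf v,\qquad (\mathring{\mathbf V}\mathbf v)_i=\mathring{\mathbf V}_{ij}\mathbf v^j .
\]
This is the paper's identity \eqref{3}. Since $\langle\nabla w,\mathring{\mathbf V}\mathbf v\rangle=\langle\nabla w,\mathring{\mathbf V}^T\mathbf v\rangle$, it gives the exact $w^{1-n}$ cancellation you anticipated.

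However, the field for which your divergence identity is exact is $Y:=\mathring{\mathbf V}^T\mathbf v=\nabla_{\mathbf v}\mathbf v-\frac Gn\mathbf v$, not $\mathring{\mathbf V}\mathbf v$. So the $b$-term involves
\[
\langle\nabla G,Y\rangle=\frac nw|\nabla w|^{2-n}\langle\mathring{\mathbf V}\mathbf v,\mathring{\mathbf V}^T\mathbf v\rangle=\frac nw|\nabla w|^{2-n}|\mathring S\mathbf v|^2 ,
\]
because $A^{\pm1/2}\mathbf v$ are multiples of $\mathbf v$. Your Cauchy--Schwarz step via $|\mathring{\mathbf V}\mathbf v|^2\le\frac{n-1}{n}|\mathring{\mathbf V}|^2|\mathbf v|^2$ fails for non-symmetric traceless tensors: take $B=e_1\otimes e_2$ and $\mathbf v=e_2$. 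For the symmetric traceless $\mathring S$ the inequality does hold, and it gives $\langle\nabla G,Y\rangle\le\frac{n-1}{w}|\nabla w|^n|\mathring S|^2$. Then $G^{-1}\le\frac{w}{(n-1)|\nabla w|^n}$ yields exactly the factor $1-b$, with no square to complete.

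With these two corrections your plan becomes the paper's argument. It proves the inequality for $T=G^{-b}w^{1-n}Y$ with right-hand side $(1-b)G^{-b}w^{1-n}|\mathring S|^2$.
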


\begin{proof}
By the local regularity theory for \(p\)-Laplace type equations (see  \cite{AnCiFa,DiB,Tol}), any solution of \eqref{eq_crit} satisfies $u\in C^{1,\alpha}_{\mathrm{loc}}(M)$, $u$ is smooth in $M\setminus\Omega_{cr}$, and $|\Omega_{cr}|=0$. Hence the following pointwise computations are justified in $M\setminus\Omega_{cr}$. Moreover, we have
$$
|\nabla w|^{\,n-2}\nabla w \in W^{1,2}_{\mathrm{loc}}(M),
\qquad
|\nabla w|^{\,n-2}\nabla^2 w \in L^2_{\mathrm{loc}}(M),
$$
hence the pointwise inequality on $M\setminus\Omega_{cr}$ extends to $M$ in the sense of distributions. We first note that from the definition of $G$ we have
\begin{equation}\label{3}
\nabla G=-\frac{G}{w}\nabla w + \frac{n}{w}|\nabla w|^{2-n} \mathbf{v} \cdot \mathbf{V}=\frac{n}{w}|\nabla w|^{2-n} \mathbf{v} \cdot \mathring{\mathbf{V}}\, ,
\end{equation}
where we used the fact that
$$
\mathrm{tr}(\mathbf{V})=\mathrm{div}(\mathbf{v})=\Delta_n w=G\, .
$$
Now we compute
\begin{align*}
\mathrm{div}\left(\mathbf{v}\cdot\mathring{\mathbf{V}}\right)=& \mathrm{div}\left(\mathbf{v}\cdot\mathbf{V}\right)-\frac{1}{n}\mathrm{div}\left(\mathrm{tr}(\mathbf{V})\mathbf{v}\right)\\
=& \langle \nabla\left(\mathrm{tr}(\mathbf{V}\right),\mathbf{v}\rangle + \mathrm{Ric}(\mathbf{v},\mathbf{v}) + |\mathbf{V}|^2 - \frac{1}{n} \langle \nabla\left(\mathrm{tr}(\mathbf{V}\right),\mathbf{v}\rangle - \frac{1}{n}\mathrm{tr}(\mathbf{V})^2 \\
=& \frac{n-1}{n}\langle \nabla\left(\mathrm{tr}(\mathbf{V}\right),\mathbf{v}\rangle + \mathrm{Ric}(\mathbf{v},\mathbf{v}) + |\mathring{\mathbf{V}}|^2 \, ,
\end{align*}
where we used the fact that
$$
|\mathring{\mathbf{V}}|^2=|\mathbf{V}|^2 - \frac{1}{n}\mathrm{tr}(\mathbf{V})^2 \, ,
$$
and the Bochner formula
$$
\mathrm{div}\left(\mathbf{v}\cdot\mathbf{V}\right)=\langle \nabla\left(\mathrm{tr}(\mathbf{V}\right),\mathbf{v}\rangle + \mathrm{Ric}(\mathbf{v},\mathbf{v}) + |\mathbf{V}|^2\,  .
$$
Next we  compute
\begin{align*}
\mathrm{div}\left(w^{1-n}\mathbf{v}\cdot\mathring{\mathbf{V}}\right)=&(1-n)w^{-n}\langle\nabla w,\mathbf{v}\cdot\mathring{\mathbf{V}}\rangle+w^{1-n}\mathrm{div}\left(\mathbf{v}\cdot\mathring{\mathbf{V}}\right)  \\
=& (1-n)w^{-n}\langle\nabla w,\mathbf{v}\cdot\mathring{\mathbf{V}}\rangle+\frac{n-1}{n}w^{1-n}\langle\nabla \mathrm{tr}(\mathbf{V}),\mathbf{v}\rangle+ w^{1-n}\mathrm{Ric}(\mathbf{v},\mathbf{v}) +w^{1-n}|\mathring{\mathbf{V}}|^2\, .
\end{align*}
Since
$$
\mathbf{v}\cdot\mathring{\mathbf{V}}=\dfrac{1}{n}\nabla\left(\mathrm{tr}(\mathbf{V})\right)w|\nabla w|^{n-2}
$$
then
$$
\langle\nabla w,\mathbf{v}\cdot\mathring{\mathbf{V}}\rangle=\dfrac{1}{n}w\langle\nabla\left(\mathrm{tr}(\mathbf{V})\right),|\nabla w|^{n-2}\nabla w\rangle=\dfrac{1}{n}w\langle\nabla\left(\mathrm{tr}(\mathbf{V})\right),\mathbf{v}\rangle \, .
$$
Summing up,
\begin{equation}\label{diff_id1}
\mathrm{div}\left(w^{2-n}|\nabla w|^{n-2}\nabla G\right)=n\,\mathrm{div}\left(w^{1-n}\mathbf{v}\cdot\mathring{\mathbf{V}}\right)=n w^{1-n} \mathrm{Ric}(\mathbf{v},\mathbf{v}) +n w^{1-n} |\mathring{\mathbf{V}}|^2\, .
\end{equation}
Now we compute, for $b\in[0,1)$,
$$
\begin{aligned}
\operatorname{div}\left(G^{-b}w^{1-n}\mathbf{v}\cdot\mathring{\mathbf{V}}\right) &=G^{-b}\operatorname{div}\left(w^{1-n}\mathbf{v}\cdot\mathring{\mathbf{V}}\right)
-bG^{-b-1}w^{1-n}\langle\nabla G,\mathbf{v}\cdot\mathring{\mathbf{V}}\rangle \\
&= G^{-b}w^{1-n}\operatorname{Ric}(\mathbf{v},\mathbf{v})+G^{-b}w^{1-n}|\mathring{\mathbf{V}}|^2-bG^{-b-1}w^{1-n} \langle\nabla G,\mathbf{v}\cdot\mathring{\mathbf{V}}\rangle .
\end{aligned}
$$
We have the following algebraic inequality (see \cite[Lemma 2.1]{Sun-Wang} with $p=n$)
$$
| \mathbf{v} \cdot \mathring{\mathbf{V}}|^2\leq \frac{n-1}{n}| \mathbf{v}|^2|\mathring{\mathbf{V}}|^2.
$$
and moreover from the definition of $\mathbf{v}$ it is immediate to see that $|\mathbf{v}|^2=|\nabla w|^{2n-2}$, hence we have
$$
|\langle\nabla G,\mathbf{v}\cdot\mathring{\mathbf{V}}\rangle|\leq\frac{n}{w}|\nabla w|^{2-n}|\mathbf{v} \cdot \mathring{\mathbf{V}}|^2\leq
\frac{n-1}{w}|\nabla w|^{2-n}|\mathbf{v}|^2|\mathring{\mathbf{V}}|^2=\frac{n-1}{w}|\nabla w|^{n}|\mathring{\mathbf{V}}|^2
$$
where we also used \eqref{3}. Therefore
$$
\begin{aligned}
\operatorname{div}\left(G^{-b}w^{1-n}\mathbf{v}\cdot\mathring{\mathbf{V}}\right) &\geq G^{-b}w^{1-n}\operatorname{Ric}(\mathbf{v},\mathbf{v})+G^{-b}w^{1-n}|\mathring{\mathbf{V}}|^2-(n-1)bG^{-b-1}w^{-n}|\nabla w|^n|\mathring{\mathbf{V}}|^2\, .
\end{aligned}
$$
From the definition of $G$ we immediately see that
$$
G^{-1}\leq \frac{1}{n-1}\frac{w}{|\nabla w|^n}\, ,
$$
and thus
$$
\begin{aligned}
\operatorname{div}\left(G^{-b}w^{1-n}\mathbf{v}\cdot\mathring{\mathbf{V}}\right) &\geq G^{-b}w^{1-n}\operatorname{Ric}(\mathbf{v},\mathbf{v})+(1-b)G^{-b}w^{1-n}|\mathring{\mathbf{V}}|^2\\
&\geq (1-b)G^{-b}w^{1-n}|\mathring{\mathbf{V}}|^2.
\end{aligned}
$$
This proves the claim.
\end{proof}

\

\section{Proof of Theorem \ref{teo1}}\label{sec3}

We first record an elementary estimate which will be used in the proof of the
main theorem.

\begin{lemma}\label{lem_est}
Let $(M,g)$ be a complete Riemannian manifold with $\operatorname{Ric}\ge0$,
and let $u$ be a weak solution of \eqref{eq_crit}. Let
$$
w=e^{-u/n}
$$
and
$$
G=(n-1)\frac{|\nabla w|^n}{w}+\frac{1}{n^{n-1}w}.
$$
Then, for every $1<q<n$ and every nonnegative $\eta\in C_c^\infty(M)$, one has
$$
\int_M G w^{1-q}\eta^n\,dV_g\le C_q\int_M w^{n-q}|\nabla\eta|^n\,dV_g .
$$
\end{lemma}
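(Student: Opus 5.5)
Since $\Delta_n w=G$ weakly, the plan is to test this equation against $\varphi=w^{1-q}\eta^n$. This is an admissible test function, because $w$ is locally bounded above and below by positive constants ($u\in L^\infty_{loc}$), $w\in W^{1,n}_{loc}$, and $\eta$ has compact support. Using $\Delta_n w=\operatorname{div}\mathbf v$ weakly, we get
$$
\int_M G\,w^{1-q}\eta^n\,dV_g=-\int_M \langle \mathbf v,\nabla(w^{1-q}\eta^n)\rangle\,dV_g
=(q-1)\int_M w^{-q}|\nabla w|^n\eta^n\,dV_g-n\int_M w^{1-q}\eta^{n-1}|\nabla w|^{n-2}\langle\nabla w,\nabla\eta\rangle\,dV_g .
$$
The gradient term on the right has a favourable coefficient only if we also use the lower bound
$$
G\ge (n-1)\frac{|\nabla w|^n}{w},
$$
which gives $G w^{1-q}\ge (n-1)w^{-q}|\nabla w|^n$.

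Set $A=\int w^{-q}|\nabla w|^n\eta^n$ and $B=\int Gw^{1-q}\eta^n$. The identity says $B=(q-1)A-n\int(\dots)$. Split $B=\varepsilon B+(1-\varepsilon)B$ and apply the pointwise bound to the second part:
$$
\varepsilon B+(1-\varepsilon)(n-1)A\le (q-1)A+n\int w^{1-q}\eta^{n-1}|\nabla w|^{n-1}|\nabla\eta| .
$$
Since $q<n$, we have $(n-1)-(q-1)=n-q>0$. Choosing $\varepsilon$ small, say with $(1-\varepsilon)(n-1)-(q-1)\ge (n-q)/2$, leaves a positive multiple $c\,A$ on the left with $c>0$.

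The cross term is handled by Young's inequality with exponents $n/(n-1)$ and $n$:
$$
w^{1-q}\eta^{n-1}|\nabla w|^{n-1}|\nabla\eta|=\big(w^{-q(n-1)/n}\eta^{n-1}|\nabla w|^{n-1}\big)\big(w^{1-q+q(n-1)/n}|\nabla\eta|\big)\le \delta\, w^{-q}|\nabla w|^n\eta^n+C_\delta\, w^{(1-q/n)n}|\nabla\eta|^n .
$$
Here $(1-q/n)n=n-q$, which is exactly the weight in the statement. Absorbing the $\delta A$ term into $cA$ then yields $\varepsilon B\le C\int w^{n-q}|\nabla\eta|^n$, and hence the claim with $C_q$ depending on $n$ and $q$.

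The only delicate point is the justification: all integrals must be finite and the test function admissible. Finiteness follows from local boundedness of $w$ and $w^{-1}$ on $\operatorname{supp}\eta$ together with $\nabla w\in L^n_{loc}$. Note also that $\operatorname{Ric}\ge0$ and completeness are not actually needed for this estimate. The main obstacle is getting the sign bookkeeping right so that $q<n$ gives the positive margin $n-q$; the endpoint $q>1$ ensures that the term $(q-1)A$ has a harmless sign convention. If $q\le1$ the argument also works, but the statement restricts to $1<q<n$.
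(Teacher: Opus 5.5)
Your proof is correct and follows the paper's route. You test $\Delta_n w=G$ with $w^{1-q}\eta^n$, gain the margin $n-q>0$ from the $(n-1)|\nabla w|^n/w$ part of $G$, and close with Young's inequality, which yields exactly the weight $w^{n-q}$. The only cosmetic difference is in the bookkeeping: you keep an $\varepsilon$-fraction of $\int_M Gw^{1-q}\eta^n\,dV_g$ on the left and use only the lower bound $G\ge(n-1)|\nabla w|^n/w$. The paper instead substitutes the exact formula for $G$, bounds $\int_M|\nabla w|^nw^{-q}\eta^n\,dV_g$ and $\int_M w^{-q}\eta^n\,dV_g$ separately, and then reassembles $\int_M Gw^{1-q}\eta^n\,dV_g$.
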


\begin{proof}
Since $\Delta_n w=G$, testing the equation with $w^{1-q}\eta^n$ gives
$$
\begin{aligned}
\int_M G w^{1-q}\eta^n\,dV_g &= \int_M \Delta_n w\, w^{1-q}\eta^n\,dV_g \\
&= (q-1)\int_M |\nabla w|^n w^{-q}\eta^n\,dV_g -n\int_M w^{1-q}\eta^{n-1}|\nabla w|^{n-2}\langle\nabla w,\nabla\eta\rangle\,dV_g .
\end{aligned}
$$
On the other hand, using the definition of $G$,
$$
\int_M G w^{1-q}\eta^n\,dV_g=(n-1)\int_M |\nabla w|^n w^{-q}\eta^n\,dV_g+\frac{1}{n^{n-1}}\int_M w^{-q}\eta^n\,dV_g .
$$
Combining the two identities, we obtain
$$
(n-q)\int_M |\nabla w|^n w^{-q}\eta^n\,dV_g+\frac{1}{n^{n-1}}\int_M w^{-q}\eta^n\,dV_g\leq n\int_M w^{1-q}\eta^{n-1}|\nabla w|^{n-1}|\nabla\eta|\,dV_g .
$$
Since $q<n$, Young's inequality yields
$$
\int_M |\nabla w|^n w^{-q}\eta^n\,dV_g+\int_M w^{-q}\eta^n\,dV_g\leq C_q\int_M w^{n-q}|\nabla\eta|^n\,dV_g .
$$
Using again the expression of $G$, we conclude
$$
\int_M G w^{1-q}\eta^n\,dV_g\leq C_q\int_M w^{n-q}|\nabla\eta|^n\,dV_g .
$$
\end{proof}

\begin{proof}[Proof of Theorem \ref{teo1}]
Without loss of generality we can assume that $F(t)\geq1$ for every t and that $\alpha>\frac{n^2}{2(n-1)}$. Fix $b\in\left(\frac{n-2}{n},1\right)$ to be chosen later. We set
$$
\theta:=2-\frac{2}{n}-b.
$$
Then $\frac{n-2}{n}<\theta<1$. We will apply Lemma \ref{lem_karp} with
$$
T:=G^{-b}w^{1-n}\mathbf{v}\cdot\mathring{\mathbf{V}},\qquad E:=(1-b)G^{-b}w^{1-n}|\mathring{\mathbf{V}}|^2, \qquad
h:=G^{-b}w^{1-n}|\nabla w|^{2n-2}.
$$
By the differential inequality of Lemma \ref{lem_ineq}
$$
\operatorname{div}T=\operatorname{div}\left(G^{-b}w^{1-n}\mathbf{v}\cdot\mathring{\mathbf{V}}\right)\geq (1-b)G^{-b}w^{1-n}|\mathring{\mathbf{V}}|^2=E
$$
in the sense of distributions. Moreover, since $|\mathbf{v}|^2=|\nabla w|^{2n-2}$, we have
$$
|T|^2=G^{-2b}w^{2-2n}|\mathbf{v}\cdot\mathring{\mathbf{V}}|^2\leq G^{-2b}w^{2-2n}|\mathbf{v}|^2|\mathring{\mathbf{V}}|^2=G^{-2b}w^{2-2n}|\nabla w|^{2n-2}|\mathring{\mathbf{V}}|^2
=\frac{1}{1-b}hE\,.
$$
Thus the structural assumptions of Lemma \ref{lem_karp} are satisfied. It remains to prove a suitable growth estimate for $h$. Since
$G\geq (n-1)\frac{|\nabla w|^n}{w}$, we have $|\nabla w|^n\le CwG$. Consequently,
$$
h = G^{-b}w^{1-n}|\nabla w|^{2n-2} \leq C G^{2-\frac2n-b}w^{3-n-\frac2n}=C G^\theta w^{3-n-\frac2n}.
$$
We now fix any $q\in(1+\frac{(n-1)(n-2)}{n\theta},n)$. Define
$$
m:=\frac{3-n-\frac2n+\theta(q-1)}{1-\theta}.
$$
By the choice of $q$, we have $m>0$. Hence, by Holder's inequality,
\begin{equation}\label{13}
\begin{aligned}
\int_{B_R} h\,dV_g&\leq C\int_{B_R}G^\theta w^{3-n-\frac2n}\,dV_g  \\
&= C\int_{B_R} \left(Gw^{1-q}\right)^\theta w^{3-n-\frac2n-\theta(1-q)}\,dV_g  \\
&\leq C \left(\int_{B_R}Gw^{1-q}\,dV_g\right)^\theta \left(\int_{B_R}w^m\,dV_g\right)^{1-\theta}.
\end{aligned}
\end{equation}
We use Lemma \ref{lem_est} with a family of standard cut-off functions $\eta\in C^2(M)$ such that
$$
\eta\equiv 1   \text{ in } B_R(o)\, , \qquad \eta\equiv 0  \text{ in } M\setminus B_{2R}(o)\, , \qquad  0\leq \eta \leq 1 \quad \text{ in } M\, ,
$$
and
$$
|\nabla\eta|\leq \frac{C}{R} \quad \text{ in } A_R:=B_{2R}(o)\setminus B_R(o)\, ,
$$
for some $C>0$ and every $R\gg1$. Then we have
\begin{equation}\label{12}
\int_{B_R}Gw^{1-q}\,dV_g\leq\frac{C_q}{R^n}\int_{B_{2R}\setminus B_R}w^{n-q}\,dV_g .
\end{equation}
Since by our assumptions
$$
u(x)\ge-\frac{n^2}{n-1}\log r(x)-\alpha\log F(r(x))
$$
for $r(x)>1$, and $w=e^{-u/n}$, we have
$$
w(x)\leq C r(x)^{\frac{n}{n-1}}F(r(x))^{\frac{\alpha}{n}}
$$
for $r(x)>1$. Thus, for every $s\geq0$, by Bishop-Gromov volume comparison,
$$
\int_{B_R}w^s\,dV_g \leq C_s R^{n+\frac{ns}{n-1}}F(R)^{\frac{\alpha s}{n}}
$$
for every $R\gg1$. Applying this estimate with $s=n-q$ and with $s=m$, from \eqref{12} we get
$$
\int_{B_R}Gw^{1-q}\,dV_g \leq C R^{\frac{n(n-q)}{n-1}} F(R)^{\frac{\alpha(n-q)}{n}},
$$
and
$$
\int_{B_R}w^m\,dV_g\leq C R^{n+\frac{nm}{n-1}}F(R)^{\frac{\alpha m}{n}}.
$$
Therefore from \eqref{13}
$$
\begin{aligned}
\int_{B_R}h\,dV_g &\leq C R^{\theta\frac{n(n-q)}{n-1}+(1-\theta)\left(n+\frac{nm}{n-1}\right)}F(R)^{\theta\frac{\alpha(n-q)}{n}+(1-\theta)\frac{\alpha m}{n}}\\
&=C R^2 F(R)^{\delta_b}\,,
\end{aligned}
$$
with $\delta_b:=\frac{\alpha(n-1)(1-b)}{n}$. We now choose
$$
b=1-\frac{n}{\alpha(n-1)}\,,
$$
so that $\delta_b=1$. Note that our initial assumption $\alpha>\frac{n^2}{2(n-1)}$ implies that $b\in(\frac{n-2}{n},1)$ as required.
Then we have
$$
\int_{B_R}h\,dV_g \leq C R^2 F(R)\,,
$$
with $F$ positive, nondecreasing and satisfying \eqref{1}. Thus condition \eqref{8} in Lemma \ref{lem_karp} is also satisfied.
We conclude by Lemma \ref{lem_karp} that $$E=(1-b)G^{-b}w^{1-n}|\mathring{\mathbf{V}}|^2\equiv0\,.$$
Thus
\begin{equation}\label{2}
\mathring{\mathbf{V}} = \nabla  \mathbf{v}-\frac{\diver\mathbf{v}}{n}g \equiv 0
\end{equation}
on $M\setminus\Omega_{\rm cr}$.
From
$$
\nabla G=\frac{n}{w}|\nabla w|^{2-n}\mathbf v\cdot\mathring{\mathbf V}
$$
we get
$$
        \nabla G\equiv0
        \qquad\text{on }M\setminus\Omega_{\rm cr}.
$$
Moreover, using
$$
\operatorname{div}\left(w^{2-n}|\nabla w|^{n-2}\nabla G\right)
=
nw^{1-n}\operatorname{Ric}(\mathbf v,\mathbf v)
+
nw^{1-n}|\mathring{\mathbf V}|^2\, ,
$$
we obtain
$$
        \operatorname{Ric}(\mathbf v,\mathbf v)=0
        \qquad\text{on }M\setminus\Omega_{\rm cr}.
$$
Since $\mathbf v=0$ on $\Omega_{\rm cr}$, this identity holds a.e. on $M$. As $\operatorname{Ric}\ge0$, it follows that
$$
        \operatorname{Ric}_{ij}\mathbf v^j=0
        \quad\text{a.e. on }M.
$$
Since $\mathbf v\in W^{1,2}_{\rm loc}(M)$ and $\mathring{\mathbf V}=0$ a.e. on \(M\), the identity
$$
        \nabla_i\mathbf v_j=\frac{G}{n}g_{ij}
$$
holds weakly, and hence a.e., on $M$. Taking the divergence in the weak sense gives
$$
        \frac1n\nabla_iG
        =
        \nabla_j\nabla_i\mathbf v_j
        =
        \nabla_i\operatorname{div}\mathbf v
        -
        \operatorname{Ric}_{ij}\mathbf v^j
        =
        \nabla_iG \, .
$$
Therefore
$$
        \nabla G=0\, ,
$$
in the sense of distributions on $M$. Since $G$ is continuous and $M$ is connected, $G$ is constant, i.e.
$$
       G\equiv\lambda .
$$
By the explicit formula
$$
        G=(n-1)\frac{|\nabla w|^n}{w}+\frac{1}{n^{n-1}w}\, ,
$$
we have $\lambda>0$. Hence
$$
        \nabla_i\mathbf v_j=\frac{\lambda}{n}g_{ij}
        \qquad\text{weakly on }M.
$$
Equivalently, setting
$$
        X:=\frac n\lambda\mathbf v,
$$
we have
$$
        \nabla_iX_j=g_{ij}
        \quad\text{weakly on }M.
$$
By standard regularity for this first-order linear system, $X$ is smooth and
$$
        \nabla_iX_j=g_{ij}
        \qquad\text{on }M.
$$
Thus $X$ is a concurrent vector field on the complete manifold $M$. By
Tashiro's theorem \cite{tashiro}, $(M,g)$ is isometric to Euclidean space.
Under this isometry, $X=x-x_0$ for some $x_0\in\mathbb R^n$. Therefore,
$$
        \mathbf v=\frac{\lambda}{n}(x-x_0)\, ,
$$
that is,
$$
        |\nabla w|^{n-2}\nabla w=\frac{\lambda}{n}(x-x_0)\, .
$$
Hence $w$ is radial around $x_0$. Writing $r=|x-x_0|$, we get
$$
        |w'|^{n-2}w'=\frac{\lambda}{n}r.
$$
Since $\lambda>0$, this gives
$$
        w'(r)=\left(\frac{\lambda}{n}\right)^{1/(n-1)}
        r^{1/(n-1)}\, .
$$
Therefore
$$
        w(x)=C_1+C_2|x-x_0|^{\frac n{n-1}},
        \qquad C_2>0\, .
$$
Since $u=-n\log w$, this gives
$$
        u=\mathcal U_{\lambda,x_0}\, ,
$$
for a suitable $\lambda>0$. The proof is complete.
\end{proof}

\

We now show that the coefficient
$$
\delta_0:=\frac{n^2}{n-1}
$$
in Theorem~\ref{teo1} is sharp. More precisely, for every $\delta>\delta_0$ we construct a complete nonflat rotationally symmetric manifold with nonnegative Ricci curvature carrying a radial solution of
$$
-\Delta_n u=e^u
$$
whose logarithmic decay is between $-\delta_0\log r$ and $-\delta\log r$. Therefore the coefficient $\delta_0$ cannot be replaced by any larger one.

We first record the elementary warped product construction which will be used in the example.

\begin{lemma}
\label{lem_wpm}
Let $a\in(0,1)$. There exists a smooth function $\psi:[0,+\infty)\to[0,+\infty)$
such that
$$
\psi(s)=s\quad\text{for }s\le1,
$$
$$
\psi(s)=as+c\quad\text{for }s\ge2
$$
for some constant $c>0$, and
$$
a\leq \psi'(s)\leq1,\qquad \psi''(s)\leq0
$$
for every $s\ge0$. Consequently, the rotationally symmetric metric
$$
g=ds^2+\psi(s)^2g_{\mathbb S^{n-1}}
$$
is a smooth complete nonflat metric on $\mathbb R^n$ with
$ \operatorname{Ric}_g\ge0$.
\end{lemma}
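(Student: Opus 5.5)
The plan is to build $\psi$ by integrating a monotone profile for $\psi'$ and then to read off the curvature from the standard warped product formulas. Fix a smooth nonincreasing cutoff $\chi:[0,+\infty)\to[0,1]$ with $\chi\equiv1$ on $[0,1]$ and $\chi\equiv0$ on $[3/2,+\infty)$. Set
$$
\psi'(s):=a+(1-a)\chi(s),\qquad \psi(s):=\int_0^s\psi'(t)\,dt .
$$
Then $\psi'$ takes values between $a$ and $1$, and $\psi''=(1-a)\chi'\le0$. We also have $\psi(s)=s$ for $s\le1$. For $s\ge2$ we get $\psi(s)=as+c$ with
$$
c=(1-a)\int_0^{3/2}\chi\,dt\ \ge\ (1-a)>0 .
$$
Finally, $\psi>0$ for $s>0$, because $\psi'\ge a>0$.

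Next comes smoothness and completeness of $g=ds^2+\psi(s)^2g_{\mathbb S^{n-1}}$ on $\mathbb R^n$. Near $s=0$ we have $\psi(s)=s$, so $g$ coincides with the Euclidean metric in polar coordinates on the unit ball and is therefore smooth at the origin. Away from the origin $\psi$ is smooth and positive, so $g$ is smooth there as well. The curves $s\mapsto(s,\theta)$ are unit speed minimizing geodesics, and the distance from the origin equals $s$. Hence closed bounded sets are contained in compact sets $\{s\le R\}$, and Hopf--Rinow gives completeness.

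For the curvature we use the standard formulas for $g=ds^2+\psi^2g_{\mathbb S^{n-1}}$:
$$
\operatorname{Ric}(\partial_s,\partial_s)=-(n-1)\frac{\psi''}{\psi},
$$
and, on unit vectors tangent to the spheres,
$$
\operatorname{Ric}=-\frac{\psi''}{\psi}+(n-2)\frac{1-(\psi')^2}{\psi^2},
$$
with no mixed terms. Both expressions are nonnegative, since $\psi''\le0$ and $0<\psi'\le1$. (For $n=2$ the second term is absent and the Gauss curvature is $-\psi''/\psi\ge0$.)

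Nonflatness is where some care is needed, so we argue it explicitly. Suppose first $n\ge3$. For $s\ge2$ the tangential Ricci curvature equals $(n-2)(1-a^2)/(as+c)^2>0$, so $g$ is not flat. Suppose now $n=2$, where only the Gauss curvature $-\psi''/\psi$ is available. Since $\psi'$ drops from $1$ to $a<1$, $\psi''$ cannot vanish identically, so the curvature is positive somewhere. Alternatively, the cone angle at infinity $2\pi a<2\pi$ rules out isometry with $\mathbb R^2$; the same volume-ratio argument ($\operatorname{AVR}=a^{n-1}<1$) also works in every dimension. This is the only point requiring a remark beyond routine computation; the rest is direct verification.
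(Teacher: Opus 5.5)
Your proposal is correct and follows essentially the same route as the paper: you build $\psi$ by integrating a smooth nonincreasing profile for $\psi'$ that interpolates between $1$ and $a$ (the paper's $\chi$ with values in $[a,1]$ is exactly your $a+(1-a)\chi$), then read off $\operatorname{Ric}\ge0$ from the standard warped-product formulas, and get completeness from $s$ being the distance to the pole. Your nonflatness discussion is a slightly more careful version of the paper's one-line remark that $\psi'\not\equiv1$: for $n\ge3$ you use the positive tangential Ricci curvature for $s\ge2$, and for $n=2$ you use $\psi''\not\equiv0$.
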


\begin{proof}
Choose a smooth nonincreasing function $\chi:[0,+\infty)\to[a,1]$
such that
$$
\chi\equiv1\quad\text{on }[0,1], \qquad \chi\equiv a\quad\text{on }[2,+\infty),
$$
and $\chi'\le0$. Then it is immediate to see that the function
$$
\psi(s):=\int_0^s\chi(t)\,dt
$$
satisfies the required properties. For the warped product metric (see e.g. \cite{pet})
$$
g=ds^2+\psi(s)^2g_{\mathbb S^{n-1}},
$$
the Ricci tensor in the radial direction is given by
$$
\operatorname{Ric}_{ss}=-(n-1)\frac{\psi''}{\psi}
$$
and, in the tangential directions,
$$
\operatorname{Ric}_{\theta\theta}=\frac{(n-2)(1-(\psi')^2)-\psi\psi''}{\psi^2}\,g_{\theta\theta}.
$$
Since $\psi''\le0$ and $0\le\psi'\le1$, both quantities are nonnegative.
Hence $\operatorname{Ric}_g\ge0$. The metric is complete because $s$ is the
distance from the pole and ranges in $[0,+\infty)$. It is nonflat since
$a<1$, so $\psi'$ is not identically equal to $1$.
\end{proof}

We next solve the radial equation on this model.

\begin{lemma}
\label{lem_wpm_sol}
Let $g=ds^2+\psi(s)^2g_{\mathbb S^{n-1}}$ be as in Lemma \ref{lem_wpm}.
For every \(\kappa\in\mathbb R\) there exists a global radial weak solution
$u_\kappa=u_\kappa(s)$ of
$$
-\Delta_n u_\kappa=e^{u_\kappa}
$$
in $(\mathbb R^n,g)$, regular at the pole, such that
$$
u_\kappa(0)=\kappa, \qquad u_\kappa'(s)<0\quad\text{for }s>0.
$$
Moreover, setting
$$
A_\kappa(s):=\psi(s)^{n-1}(-u_\kappa'(s))^{n-1},
$$
the limit
$$
L_\kappa:=\lim_{s\to+\infty}A_\kappa(s) = \int_0^{+\infty}\psi(s)^{n-1}e^{u_\kappa(s)}\,ds
$$
exists, it is finite and satisfies
$$
L_\kappa^{\frac1{n-1}}\le \delta_0:=\frac{n^2}{n-1},
$$
and
$$
\lim_{\kappa\to+\infty}L_\kappa^{\frac1{n-1}}=\delta_0.
$$
Finally,
$$
\lim_{s\to+\infty}\frac{u_\kappa(s)}{\log s}=-\frac{L_\kappa^{\frac1{n-1}}}{a}.
$$
\end{lemma}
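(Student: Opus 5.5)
The plan is to reduce everything to the radial ODE and read off all the claims from a first integral and a Pohozaev-type identity. For radial $u=u(s)$ on $g=ds^2+\psi(s)^2g_{\mathbb S^{n-1}}$, the equation $-\Delta_n u=e^u$ reads
$$
-\bigl(\psi^{n-1}|u'|^{n-2}u'\bigr)'=\psi^{n-1}e^{u}.
$$
As long as $u'<0$, this is equivalent to the first-order system
$$
A'=\psi^{n-1}e^{u},\qquad u'=-\frac{A^{1/(n-1)}}{\psi},\qquad A(0)=0,\quad u(0)=\kappa .
$$

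\emph{Existence.} Since $\psi(s)=s$ on $[0,1]$, near the pole the metric is Euclidean. There I would take $u_\kappa$ to be exactly the Euclidean bubble $\mathcal U_{\lambda,0}$ from \eqref{Tal_n}, with $\lambda$ fixed by $(c_n\lambda)^n=e^\kappa$. This gives regularity at the pole; alternatively one can run a fixed point on the integral form $u(s)=\kappa-\int_0^s\psi^{-1}\bigl(\int_0^t\psi^{n-1}e^u\bigr)^{1/(n-1)}dt$. Beyond $s=1$ the system is a regular ODE, because $\psi\ge1$ there. Global continuation follows from three observations:
\begin{itemize}
\item $A$ is increasing and positive for $s>0$, so $u'<0$ for $s>0$;
\item hence $u\le\kappa$ and $A(s)\le e^\kappa\int_0^s\psi^{n-1}$, which is locally bounded;
\item therefore $u'$ cannot blow up in finite time.
\end{itemize}
Being $C^1$ with $\psi^{n-1}|u'|^{n-2}u'$ absolutely continuous, the radial solution is a weak solution on $(\mathbb R^n,g)$.

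\emph{Key bound via a Pohozaev identity.} Differentiate $\psi^n e^u$ and use the system:
$$
\frac{d}{ds}\bigl(\psi^ne^u\bigr)=n\psi'\psi^{n-1}e^u+\psi^ne^uu'=n\psi'A'-A^{\frac1{n-1}}A'=n\psi'A'-\frac{n-1}{n}\frac{d}{ds}A^{\frac n{n-1}}.
$$
Integrating from $0$ to $s$ gives
$$
0\le\psi^ne^u=n\int_0^s\psi'A'-\frac{n-1}{n}A^{\frac n{n-1}}.
$$
Since $\psi'\le1$ and $A'\ge0$, the integral is at most $A(s)$. Hence $\frac{n-1}{n}A^{n/(n-1)}\le nA$, that is, $A(s)^{1/(n-1)}\le\delta_0$ for every $s$. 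So $A$ is increasing and bounded, the limit $L_\kappa$ exists and is finite, and $L_\kappa=\lim A=\int_0^\infty\psi^{n-1}e^{u_\kappa}ds$ by the first equation. This identity is the step I expect to carry the lemma. It is the only place where $\psi'\le1$ enters, and the monotonicity of $A$ and the bound must be used simultaneously.

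\emph{Limit as $\kappa\to+\infty$ and asymptotics.} Because $A$ is increasing, $L_\kappa\ge A_\kappa(1)$. On $[0,1]$ we have $u_\kappa=\mathcal U_{\lambda,0}$ with $\lambda\to\infty$ as $\kappa\to\infty$. By scaling, $A_\kappa(1)=s^{n-1}(-\mathcal U_{\lambda,0}')^{n-1}|_{s=1}=\bigl(\delta_0\,\tfrac{\lambda^{n/(n-1)}}{1+\lambda^{n/(n-1)}}\bigr)^{n-1}$, which tends to $\delta_0^{n-1}$. Combined with the upper bound, this gives $L_\kappa^{1/(n-1)}\to\delta_0$. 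Finally, for $s\ge2$ we have $\psi=as+c$, so
$$
-u_\kappa'(s)\,s=\frac{s\,A_\kappa(s)^{1/(n-1)}}{as+c}\to\frac{L_\kappa^{1/(n-1)}}{a}.
$$
By l'H\^opital (or Ces\`aro on $\int^s u'$), $u_\kappa(s)/\log s\to-L_\kappa^{1/(n-1)}/a$.
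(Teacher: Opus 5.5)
Your proof is correct and follows essentially the same route as the paper. Both use the same first-order system for $(A,u)$, the same Pohozaev-type identity giving $A^{1/(n-1)}\le\delta_0$ (you differentiate $\psi^n e^u$ where the paper integrates $(A^{n/(n-1)})'$ by parts), the lower bound $L_\kappa\ge A_\kappa(1)$ evaluated on the Euclidean bubble, and l'H\^opital for the asymptotics. The only differences are cosmetic: you define the solution on $[0,1]$ to be the explicit bubble, so you can compute $A_\kappa(1)$ directly instead of rescaling and invoking uniqueness of the regular radial Euclidean solution, and you get global continuation from the cruder local bound $A\le e^\kappa\int_0^s\psi^{n-1}$ rather than from the uniform bound.
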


\begin{proof}
For a radial function $u=u(s)$ on $(\mathbb{R}^n,g)$, one has
$$
\Delta_n u=\frac{1}{\psi^{n-1}}\left(\psi^{n-1}|u'|^{n-2}u'\right)'.
$$
We look for decreasing solutions and set
$$
y:=-u'.
$$
Then $|u'|^{n-2}u'=-y^{n-1}$ and the equation
$$
-\Delta_n u=e^u
$$
is equivalent to
$$
\left(\psi^{n-1}y^{n-1}\right)'=\psi^{n-1}e^u.
$$
Equivalently, with
$$
A(s):=\psi(s)^{n-1}y(s)^{n-1},
$$
we obtain the first-order system
\begin{equation}\label{4}
\begin{cases}
A'(s)=\psi(s)^{n-1}e^{u(s)},\\[4pt]
u'(s)=-A(s)^{\frac1{n-1}}\psi(s)^{-1}.
\end{cases}
\end{equation}
The regularity condition at the pole is
$$
A(0)=0, \qquad u(0)=\kappa.
$$
Since $\psi(s)=s$ near $s=0$, the system \eqref{4} admits near the pole the local solution given by the regular radial Euclidean solution with initial value $\kappa$.
Let us consider such solution $(A_\kappa,u_\kappa)$ of \eqref{4}, defined on its maximal interval of existence.

We \emph{claim} that it is defined for every $s\geq0$. Indeed, we start noting that $A_\kappa'>0$, so that $A_\kappa$ is strictly monotone increasing and hence strictly positive for $s>0$. Then $u_\kappa'<0$, so that $u_\kappa$ is strictly monotone decreasing.

We now prove a uniform bound on $A_\kappa$. By \eqref{4} we compute
$$
\left(A_\kappa^{\frac n{n-1}}\right)'=\frac n{n-1}A_\kappa^{\frac1{n-1}}A_\kappa'=\frac n{n-1}(-u_\kappa'\psi)\psi^{n-1}e^{u_\kappa}=-\frac n{n-1}\psi^n(e^{u_\kappa})'\,.
$$
Integrating the previous identity from $0$ to $R$ we get
$$
A_\kappa(R)^{\frac n{n-1}}=-\frac n{n-1}\psi(R)^ne^{u_\kappa(R)}+\frac{n^2}{n-1}\int_0^R\psi'\psi^{n-1}e^{u_\kappa}\,ds\,,
$$
where we integrated by parts and we used $A_\kappa(0)=0$ and $\psi(0)=0$.
Since $0\le\psi'\le1$ we deduce
$$
A_\kappa(R)^{\frac n{n-1}}\le\frac{n^2}{n-1}\int_0^R\psi^{n-1}e^{u_\kappa}\,ds.
$$
From \eqref{4} and $A_\kappa(0)=0$ we have
\begin{equation}\label{15}
\int_0^R\psi^{n-1}e^{u_\kappa}\,ds=A_\kappa(R)\,,
\end{equation}
hence
$$
A_\kappa(R)^{\frac n{n-1}}\leq \frac{n^2}{n-1}A_\kappa(R)=\delta_0 A_\kappa(R)\qquad\textrm{for every }R>0.
$$
Since $A_\kappa(R)>0$ for $R>0$, we get
$$
A_\kappa(R)^{\frac1{n-1}}\le\delta_0\qquad\text{for every }R>0\,.
$$
Thus $A_\kappa$ is strictly increasing and bounded on its maximal interval of existence; from \eqref{4} we see that $u_\kappa'$ is bounded on bounded intervals, and hence also $u_\kappa$. From standard ODE theory, we have that $(A_\kappa,u_\kappa)$ must be defined for every $s\geq0$. We have thus proven our \emph{claim}.

Since $A_\kappa$ is increasing and bounded, also using \eqref{15}, we have
\begin{equation}\label{18}
L_\kappa:=\lim_{R\to+\infty}A_\kappa(R)=\int_0^{+\infty}\psi(s)^{n-1}e^{u_\kappa(s)}\,ds
\end{equation}
exists and satisfies
\begin{equation}\label{17}
L_\kappa^{\frac1{n-1}}\le\delta_0\,.
\end{equation}
We now show that
$$
\lim_{\kappa\to+\infty}L_\kappa^{\frac1{n-1}}=\delta_0.
$$
The upper bound has already been proved. We prove the lower bound. Let
$$
\varepsilon_\kappa:=e^{-\frac{\kappa}{n}}.
$$
Since $\psi(s)=s$ for $s\in[0,1]$, for every $\kappa$ sufficiently large the function
$$
U_\kappa(\rho):=u_\kappa(\varepsilon_\kappa\rho)-\kappa
$$
solves
$$
-\Delta_n U_\kappa=e^{U_\kappa}
$$
as a radial equation in $B_{\varepsilon_\kappa^{-1}}(0)\subset(\mathbb{R}^n,g_\textrm{Eucl})$, with
$$
U_\kappa(0)=0,\qquad U_\kappa'(0)=0.
$$
By uniqueness of the regular radial Euclidean solution, $U_\kappa$ coincides with the normalized Euclidean bubble
$$
U_*(\rho)=-n\log\left(1+c_n^{-\frac n{n-1}}\rho^{\frac n{n-1}}\right),
$$
in $B_{\varepsilon_\kappa^{-1}}(0)$, where
$$
c_n=n^{1/n}\left(\frac{n^2}{n-1}\right)^{\frac{n-1}{n}}.
$$
In particular,
\begin{equation}\label{16}
\rho^{n-1}\left(-U_*'(\rho)\right)^{n-1}\longrightarrow\delta_0^{\,n-1}\qquad\text{as }\rho\to+\infty.
\end{equation}
For every $\rho\in(0,\varepsilon_\kappa^{-1})$ and $\kappa$ large,
$$
\psi(\varepsilon_\kappa\rho)=\varepsilon_\kappa\rho
$$
and
$$
u_\kappa'(\varepsilon_\kappa\rho)=\varepsilon_\kappa^{-1}U_*'(\rho)\,.
$$
Therefore
$$
A_\kappa(\varepsilon_\kappa\rho)=\psi(\varepsilon_\kappa\rho)^{n-1}\left(-u_\kappa'(\varepsilon_\kappa\rho)\right)^{n-1}=\rho^{n-1}\left(-U_*'(\rho)\right)^{n-1}.
$$
Since $A_\kappa$ is increasing and converges to $L_\kappa$, we have for every $\kappa$ large and every $\rho\in(0,\varepsilon_\kappa^{-1})$
$$
L_\kappa\ge A_\kappa(\varepsilon_\kappa\rho)=\rho^{n-1}\left(-U_*'(\rho)\right)^{n-1}.
$$
By continuity we can choose $\rho=\varepsilon_\kappa^{-1}$ in the previous inequality, and then pass to the $\liminf_{\kappa\to+\infty}$. Therefore, also using \eqref{16}, we obtain
$$
\liminf_{\kappa\to+\infty}L_\kappa\geq\delta_0^{\,n-1}.
$$
Together with the upper bound \eqref{17} this proves
$$
\lim_{\kappa\rightarrow+\infty}L_\kappa^{\frac1{n-1}}=\delta_0.
$$
Finally, since $\psi(s)=as+c$ for $s\ge2$, by \eqref{18} and \eqref{4}, we have
$$
su_\kappa'(s)=-\frac{sA_\kappa(s)^{\frac1{n-1}}}{\psi(s)}\longrightarrow-\frac{L_\kappa^{\frac1{n-1}}}{a}
$$
as $s$ tends to $+\infty$. It follows that $u_\kappa$ diverges to $-\infty$ at infinity and
$$
\lim_{s\to+\infty}\frac{u_\kappa(s)}{\log s}=-\frac{L_\kappa^{\frac1{n-1}}}{a}.
$$
This completes the proof.
\end{proof}

We can now complete the sharpness argument.

\begin{example}[Sharpness of the leading coefficient]\label{ex_sharpness}
Let
$$
\delta>\delta_0.
$$
Choose
$$
a\in\left(\frac{\delta_0}{\delta},1\right).
$$
Then
$$
\delta_0<\frac{\delta_0}{a}<\delta.
$$
Let $g=ds^2+\psi(s)^2g_{\mathbb S^{n-1}}$ be the corresponding warped product
metric given by Lemma \ref{lem_wpm}. By Lemma~\ref{lem_wpm_sol},
$$
\frac{L_\kappa^{\frac1{n-1}}}{a}\longrightarrow\frac{\delta_0}{a}\qquad\text{as }\kappa\to+\infty.
$$
Hence, for $\kappa$ large enough,
$$
\delta_0<\delta_\kappa:=\frac{L_\kappa^{\frac1{n-1}}}{a}<\delta.
$$
For this value of $\kappa$, the corresponding solution satisfies
$$
\lim_{s\to+\infty}\frac{u_\kappa(s)}{\log s}=-\delta_\kappa\,
$$
i.e. $u_\kappa(s)\sim-\delta_\kappa\log s$. Since $\delta_\kappa\in(\delta_0,\delta)$, we have, for all sufficiently large $s$,
$$
-\delta_0\log s\ge u_\kappa(s)\geq -\delta\log s.
$$
Since $s$ is the geodesic distance from the pole, this gives
$$
-\delta_0\log r(x)\ge u_\kappa(x)\ge -\delta\log r(x)
$$
outside a compact set.

The manifold $(\mathbb R^n,g)$ is complete, nonflat, and has nonnegative Ricci
curvature, while \(u_\kappa\) solves
$$
-\Delta_n u_\kappa=e^{u_\kappa}.
$$
Moreover, since $\psi(s)=as+c$ for $s\ge2$, we have
$$
\operatorname{Vol}(B_R)=\sigma_{n-1}\int_0^R\psi(s)^{n-1}\,ds\sim\sigma_{n-1}\frac{a^{n-1}}{n}R^n=\omega_n a^{n-1}R^n .
$$
Hence
$$
\operatorname{AVR}(\mathbb R^n,g)=a^{n-1}>0.
$$
Furthermore, by \eqref{18},
$$
\int_{\mathbb R^n}e^{u_\kappa}\,dV_g=\sigma_{n-1}\int_0^{+\infty}\psi(s)^{n-1}e^{u_\kappa(s)}\,ds=\sigma_{n-1}L_\kappa<+\infty.
$$
and hence $u$ has finite mass.

Therefore any rigidity statement with the leading coefficient
$\delta_0=\frac{n^2}{n-1}$ replaced by a larger coefficient $\delta>\delta_0$
would be false. This proves the sharpness of the coefficient $-\delta_0$ in
Theorem \ref{teo1}.
\end{example}

\

\section{Proof of Proposition \ref{prop_log_lb} and Theorem \ref{teo2}}
\label{sec:finite-mass}

In this section we prove a rigidity result under a finite-mass assumption. The
argument is different from the $P-$function method used in the proof of
Theorem~\ref{teo1}. It is closer in spirit to the two-dimensional approach
of Cai--Lai \cite{CaiLai}, but the logarithmic potential estimate is replaced by a nonlinear capacitary estimate for the $n-$Laplacian.

We split the proof into two independent ingredients. The first one is a sharp
isoperimetric consequence of the equation and the finite-mass assumption.

\begin{lemma}\label{lem_mlb}
Let $(M^n,g)$ be a complete noncompact Riemannian manifold with
$\operatorname{Ric}\ge0$ and positive asymptotic volume ratio
$\theta=\operatorname{AVR}(M,g)>0$. Let $u$ be a weak solution of
$$
-\Delta_n u=e^u
$$
such that
$$
\mathcal{M}:=\int_M e^u\,dV_g<+\infty.
$$
Then
$$
\mathcal{M}^{\frac1{n-1}}\geq\frac{n^2}{n-1}\left(\theta\sigma_{n-1}\right)^{\frac1{n-1}}.
$$
\end{lemma}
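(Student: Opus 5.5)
The plan is a level-set argument in the spirit of Chen--Li and Cai--Lai. We combine the sharp isoperimetric inequality of Brendle and Balogh--Krist\'aly with H\"older's inequality on level sets and the coarea formula.

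\textbf{Isoperimetric input.} For $\theta=\operatorname{AVR}(M,g)>0$ and every set $\Omega$ of finite perimeter and finite volume, the inequality reads
$$
|\partial\Omega|\ge n(\theta\omega_n)^{\frac1n}|\Omega|^{\frac{n-1}{n}}.
$$

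\textbf{Notation.} For $t\in\RR$ set
$$
\Omega_t=\{u>t\},\qquad \mu(t)=|\Omega_t|,\qquad m(t)=\int_{\Omega_t}e^u\,dV_g .
$$
By Chebyshev, $\mu(t)\le e^{-t}\mathcal M<\infty$. By the coarea formula, for a.e.\ $t$,
$$
-\mu'(t)\ge\int_{\{u=t\}}\frac{1}{|\nabla u|},\qquad -m'(t)=e^t(-\mu'(t)).
$$
The latter holds in the sense that $m$ is absolutely continuous and $\mu$ is monotone. Since $|\Omega_{cr}|=0$ and $u\in C^{1,\alpha}$, Sard-type arguments on $M\setminus\Omega_{cr}$ make a.e.\ level set regular where it matters.

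\textbf{Step 1: flux identity.} We show that for a.e.\ $t$,
$$
\int_{\{u=t\}}|\nabla u|^{n-1}=m(t).
$$
Test the weak equation with $\varphi=\min\{(u-t)_+,\varepsilon\}/\varepsilon\cdot\eta_R$, where $\eta_R$ is a standard cutoff. Then let $\varepsilon\to0$ and $R\to\infty$.

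The main obstacle is that $\Omega_t$ need not be bounded, since $u$ is not known to tend to $-\infty$. The error term involving $\nabla\eta_R$ must vanish. I would handle it via H\"older with $|\nabla\eta_R|\le C/R$, using finiteness of $\mu(t)$ and a Caccioppoli-type bound for $\int_{\Omega_t}|\nabla u|^n$ obtained by testing with $(u-t)_+\eta_R^n$. Alternatively, one can simply prove the inequality $\int_{\{u=t\}}|\nabla u|^{n-1}\le m(t)$, which is the only direction needed. The same ingredients also show that $\Omega_t$ has finite perimeter, so the isoperimetric inequality applies.

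\textbf{Step 2: H\"older and isoperimetry.} By H\"older,
$$
|\partial\Omega_t|\le\Big(\int_{\{u=t\}}\frac{1}{|\nabla u|}\Big)^{\frac{n-1}{n}}\Big(\int_{\{u=t\}}|\nabla u|^{n-1}\Big)^{\frac1n}\le(-\mu')^{\frac{n-1}{n}}m^{\frac1n}.
$$
Raising to the $n$-th power and using the isoperimetric inequality gives
$$
n^n\theta\omega_n\,\mu^{n-1}\le(-\mu')^{n-1}m=e^{-(n-1)t}(-m')^{n-1}m .
$$

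\textbf{Step 3: integrate in $t$.} Taking $(n-1)$-th roots, and using $(n^n\theta\omega_n)^{\frac1{n-1}}=n(\theta\sigma_{n-1})^{\frac1{n-1}}$, we get
$$
-\frac{n-1}{n}\frac{d}{dt}\,m^{\frac n{n-1}}=m^{\frac1{n-1}}(-m')\ge n(\theta\sigma_{n-1})^{\frac1{n-1}}\,e^t\mu(t).
$$
Integrate over $t\in\RR$. On the left, $m(-\infty)=\mathcal M$ and $m(+\infty)=0$ give $\frac{n-1}{n}\mathcal M^{\frac n{n-1}}$. This uses monotonicity and absolute continuity of $m^{n/(n-1)}$ and does not require knowing $\sup u<\infty$ beyond $m\to0$. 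On the right, the layer-cake formula gives $\int_{\RR}e^t\mu(t)\,dt=\int_Me^u\,dV_g=\mathcal M$. Hence
$$
\frac{n-1}{n}\mathcal M^{\frac n{n-1}}\ge n(\theta\sigma_{n-1})^{\frac1{n-1}}\mathcal M .
$$
Dividing by $\mathcal M>0$ yields the claim.
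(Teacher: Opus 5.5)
Your proof is the paper's proof: the flux identity on $\Omega_t=\{u>t\}$, the coarea formula for $-m'=e^t(-\mu')$, H\"older on $\{u=t\}$, the Brendle/Balogh--Krist\'aly inequality, and integration in $t$ with the layer-cake formula. Your form of the isoperimetric inequality, raised to the power $\frac n{n-1}$, is the paper's $|\partial\Omega|^{\frac n{n-1}}\ge n(\theta\sigma_{n-1})^{\frac1{n-1}}\operatorname{Vol}(\Omega)$, and your constants agree. The paper justifies the flux identity only by ``the divergence theorem on $\Omega_t$'' and never addresses that $\Omega_t$ may be unbounded, so your Step 1 goes beyond it. However, the mechanism you propose there does not close.

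Testing with $(u-t)_+\eta_R^n$ and using Young gives
$$\int_{\Omega_t}|\nabla u|^n\eta_R^n\,dV_g\le 2\int_M e^u(u-t)_+\eta_R^n\,dV_g+C\int_M(u-t)_+^n|\nabla\eta_R|^n\,dV_g .$$
The last term is harmless, since $(u-t)_+^n\le n!\,e^{u-t}$. The first term, however, is only bounded by $(\sup_{B_{2R}}u-t)_+\mathcal M$. You have no a priori upper bound on $u$ (only $e^u\in L^1$), so you get no $R$-uniform control of $\int_{\Omega_t\cap A_R}|\nabla u|^n$. Retreating to the one-sided inequality does not help, because the cutoff error $\int_M\varepsilon^{-1}\min\{(u-t)_+,\varepsilon\}|\nabla u|^{n-2}\langle\nabla u,\nabla\eta_R\rangle\,dV_g$ has no sign.

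What works is a bounded test function adapted to the nonlinearity. Testing with $(e^{-t/n}-e^{-u/n})_+\eta^n$ and absorbing by Young yields
$$\int_{\Omega_t}e^{-u/n}|\nabla u|^n\eta^n\,dV_g\le C_{n,t}\Big(\mathcal M+\int_{\Omega_t}e^{\frac{(n-1)u}{n}}|\nabla\eta|^n\,dV_g\Big)\le C_{n,t}\,\mathcal M\big(1+\|\nabla\eta\|_\infty^n\big),$$
since $e^{\frac{(n-1)u}{n}}\le e^{-t/n}e^u$ on $\Omega_t$. Hence $\int_{\Omega_t}e^{-u/n}|\nabla u|^n\,dV_g<\infty$. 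The cutoff error is at most $\frac CR\int_{\Omega_t\cap A_R}|\nabla u|^{n-1}\,dV_g$, and H\"older gives
$$\frac CR\int_{\Omega_t\cap A_R}|\nabla u|^{n-1}\,dV_g\le\frac CR\Big(\int_{\Omega_t}e^{-u/n}|\nabla u|^n\,dV_g\Big)^{\frac{n-1}n}\Big(\int_{\Omega_t}e^{\frac{(n-1)u}{n}}\,dV_g\Big)^{\frac1n}\longrightarrow0,$$
uniformly in $\varepsilon$. So the flux identity holds for a.e.\ $t$. With this replacement, and with the isoperimetric inequality in the finite-volume form you state (which follows from the bounded case by intersecting with balls), your argument is complete.
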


\begin{proof}
For $t\in\mathbb R$, set
$$
\Omega_t:=\{x\in M:u(x)>t\},\quad \text{ and } \quad  F(t):=\int_{\Omega_t}e^u\,dV_g.
$$
Since $e^u\in L^1(M)$, we have
$$
\operatorname{Vol}(\Omega_t)=\int_{\Omega_t}e^u e^{-u}\,dV_g\leq e^{-t}\mathcal{M}<+\infty
$$
for every $t\in\mathbb R$.

For almost every regular value $t$, using the equation and the divergence
theorem on $\Omega_t$, we get
\begin{equation}\label{19}
F(t)=\int_{\Omega_t}e^u\,dV_g=-\int_{\Omega_t}\Delta_n u\,dV_g=\int_{\partial\Omega_t}|\nabla u|^{n-1}\,d\sigma_g.
\end{equation}
Moreover, by the coarea formula,
\begin{equation}\label{20}
-F'(t)=e^t\int_{\partial\Omega_t}\frac{1}{|\nabla u|}\,d\sigma_g
\end{equation}
for almost every $t$. By Holder's inequality,
$$
|\partial\Omega_t|\leq\left(\int_{\partial\Omega_t}|\nabla u|^{n-1}\,d\sigma_g\right)^{1/n}\left(\int_{\partial\Omega_t}\frac{1}{|\nabla u|}\,d\sigma_g\right)^{(n-1)/n}.
$$
Thus, using \eqref{19},
\begin{equation}\label{21}
\int_{\partial\Omega_t}\frac{1}{|\nabla u|}\,d\sigma_g\geq|\partial\Omega_t|^{\frac n{n-1}}F(t)^{-\frac1{n-1}}.
\end{equation}
Combining \eqref{20} and \eqref{21}, we obtain
$$
-F'(t)\geq e^t|\partial\Omega_t|^{\frac n{n-1}}F(t)^{-\frac1{n-1}}.
$$
Equivalently,
\begin{equation}\label{22}
-\left(F(t)^{\frac n{n-1}}\right)'\geq \frac n{n-1}e^t|\partial\Omega_t|^{\frac n{n-1}}.
\end{equation}
We now use the sharp isoperimetric inequality on complete manifolds with
nonnegative Ricci curvature and asymptotic volume ratio $\theta$:
\begin{equation}\label{23}
|\partial\Omega|^{\frac n{n-1}}\geq n\left(\theta\sigma_{n-1}\right)^{\frac1{n-1}}\operatorname{Vol}(\Omega)
\end{equation}
for every bounded finite-perimeter set $\Omega$. Applying this to $\Omega_t$
in \eqref{22}, we get
\begin{equation}\label{24}
-\left(F(t)^{\frac n{n-1}}\right)'\geq \frac{n^2}{n-1} \left(\theta\sigma_{n-1}\right)^{\frac1{n-1}} e^t\operatorname{Vol}(\Omega_t).
\end{equation}
Integrating \eqref{24} from $-\infty$ to $+\infty$, and using
$$
\lim_{t\to-\infty}F(t)=\mathcal{M},\qquad\lim_{t\to+\infty}F(t)=0,
$$
we obtain
$$
\mathcal{M}^{\frac n{n-1}}\geq\frac{n^2}{n-1}\left(\theta\sigma_{n-1}\right)^{\frac1{n-1}}\int_{-\infty}^{+\infty}e^t\operatorname{Vol}(\Omega_t)\,dt.
$$
By the layer-cake formula,
$$
\int_{-\infty}^{+\infty}e^t\operatorname{Vol}(\Omega_t)\,dt=\int_M e^u\,dV_g=\mathcal{M}.
$$
Therefore
$$
\mathcal{M}^{\frac n{n-1}}\geq\frac{n^2}{n-1}\left(\theta\sigma_{n-1}\right)^{\frac1{n-1}}\mathcal{M}.
$$
Since $\mathcal{M}>0$, this completes the proof.
\end{proof}

The second ingredient is the nonlinear analogue of the logarithmic potential
lower estimate. It is a capacitary consequence of the fact that
$-u$ has finite positive $n-$Riesz measure. We start with the following general Lemma.

\begin{lemma}\label{lem_cap}
Let $\Omega\Subset M$ be a bounded open domain and let
$K\Subset\Omega$ be compact. Let
$$
z\in W^{1,n}(\Omega)\cap L^\infty(\Omega)
$$
satisfy
\begin{equation}\label{26}
  \Delta_n z=f
\end{equation}
in the weak sense in $\Omega$, with $f\in L^1(\Omega)$ and $f\geq0$ a.e. in $\Omega$. Then
\begin{equation}\label{43}
\mu(K):=\int_{K}f\,dV_g\leq\operatorname{Cap}_n(K,\Omega)
\left(\sup_{\Omega} z- \inf_{K} z\right)^{n-1}\,,
\end{equation}
where
$$
\operatorname{Cap}_n(K,\Omega):=\inf\left\{\int_\Omega |\nabla\varphi|^n\,dV_g:\varphi\in W^{1,n}_0(\Omega),\,0\le \varphi\le 1,\,\varphi=1 \text{ in a neighbourhood of }K\right\}.
$$
\end{lemma}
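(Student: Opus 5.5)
We test the equation $\Delta_n z=f$ against a function built from a competitor $\varphi$ for the capacity and from a truncation of $z$ itself. Set $S:=\sup_\Omega z$ and $m:=\inf_K z$. We may assume $S>m$, since if $S=m$ then $z$ is constant on $K$ and attains its supremum there, and a separate (routine) argument handles this degenerate case, for instance by replacing $S$ with $S+\epsilon$ and letting $\epsilon\to0$. Put $\ell:=S-m>0$. Fix $\varphi\in W^{1,n}_0(\Omega)$ with $0\le\varphi\le1$ and $\varphi=1$ near $K$. We may also assume $\varphi$ is smooth, by density, keeping $\varphi=1$ on a neighbourhood of $K$.

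The cleanest route is a comparison argument via a test function of the form
$$
\xi:=\min\left\{\varphi,\ \frac{S-z}{\ell}\right\}\quad\text{or rather}\quad \xi:=\varphi\cdot\psi(z).
$$
Concretely, I would use the test function $\eta:=(\varphi\,\ell-(z-m)^-\ldots)$. A simpler option, which I commit to, is to take
$$
\eta:=\left(\ell\varphi-(S-z)\right)^+ ,
$$
which lies in $W^{1,n}_0(\Omega)$. Indeed, $\ell\varphi-(S-z)\le \ell\varphi$ vanishes near $\partial\Omega$ in the trace sense, and it is bounded because $z\in L^\infty$. Moreover $\eta\ge0$. On $K$ we have $\varphi=1$ and $S-z\le S-m=\ell$, so $\eta\ge \ell-(S-z)=z-m\ge0$. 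This is not yet the constant $\ell$ on $K$, so instead I use the normalized test function
$$
\eta:=\min\left\{1,\ \frac{(\ell\varphi-(S-z))^+}{\epsilon}\right\}
$$
with $\epsilon$ small. Then $\eta=1$ where $z-m\ge\epsilon$ on $K$. Because $f\ge0$ and the region $\{z<m+\epsilon\}\cap K$ shrinks to $\{z=m\}\cap K$ as $\epsilon\to0$, this produces a lower bound for $\int_K f$ only up to that set. That set requires care, and it is the main obstacle.

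To avoid the obstacle, I would instead apply the argument with $m$ replaced by $m-\epsilon$, that is, with $\ell_\epsilon=S-m+\epsilon$. Then $z-(m-\epsilon)\ge\epsilon$ on $K$, so $\eta=1$ on a neighbourhood of $K$ by continuity ($z\in C^{0}$ by the regularity quoted in Lemma \ref{lem_ineq}'s proof, or by working with quasi-continuous representatives). Testing, we get
$$
\mu(K)\le\int_\Omega f\eta\,dV_g=-\int_\Omega|\nabla z|^{n-2}\langle\nabla z,\nabla\eta\rangle\,dV_g .
$$
On the set where $0<\eta<1$ we have $\nabla\eta=\epsilon^{-1}(\ell_\epsilon\nabla\varphi+\nabla z)$. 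Hence
$$
-|\nabla z|^{n-2}\langle\nabla z,\nabla \eta\rangle=\epsilon^{-1}\bigl(-\ell_\epsilon|\nabla z|^{n-2}\langle\nabla z,\nabla\varphi\rangle-|\nabla z|^n\bigr).
$$
By Young's inequality, $\ell_\epsilon|\nabla z|^{n-1}|\nabla\varphi|\le |\nabla z|^n+\frac{(n-1)^{n-1}}{n^n}\ell_\epsilon^n|\nabla\varphi|^n$. This bounds the integrand by $C_n\epsilon^{-1}\ell_\epsilon^n|\nabla\varphi|^n$ on a layer of "thickness" $\epsilon$ in the variable $\ell_\epsilon\varphi+z$. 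The constant and the power are not yet right.

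The correct replacement is to use the level function $\Phi:=(\ell_\epsilon\varphi+z-S)^+/\ell_\epsilon\in[0,1]$ itself as the test function, combined with the coarea formula in the variable $\Phi$. The standard capacitary proof (as in Kilpel\"ainen--Mal\'y) compares $z$ with the $n$-capacitary potential $\varphi_K$ of $(K,\Omega)$. The function $v:=S-\ell_\epsilon(1-\varphi_K)$ is $n$-harmonic in $\Omega\setminus K$, with $v\ge z$ on $\partial\Omega$ and $v\le z$ on $K$. Testing $\Delta_n z-\Delta_n v$ against $\varphi_K$ and using monotonicity of the $n$-Laplacian then gives
$$
\mu(K)\le\int_\Omega f\varphi_K\le \ell_\epsilon^{n-1}\int_\Omega|\nabla\varphi_K|^n=\ell_\epsilon^{n-1}\operatorname{Cap}_n(K,\Omega).
$$
Letting $\epsilon\to0$ yields \eqref{43}. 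The hardest step is this comparison, namely justifying $\int|\nabla z|^{n-2}\langle\nabla z,\nabla\varphi_K\rangle\le\ell^{n-1}\operatorname{Cap}_n$ with the exact constant. I would do it via the weak comparison principle on $\Omega\setminus K$ together with the identity $\int|\nabla v|^{n-2}\langle\nabla v,\nabla\varphi_K\rangle=\ell^{n-1}\operatorname{Cap}_n(K,\Omega)$.
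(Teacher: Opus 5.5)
\textbf{There is a genuine gap, at exactly the step you flag as hardest.} The failure is not a missing detail: the inequality $\int_\Omega f\varphi_K\,dV_g\le \ell_\epsilon^{n-1}\operatorname{Cap}_n(K,\Omega)$ that you derive by testing $\Delta_n z-\Delta_n v$ against the capacitary potential $\varphi_K$ is false in general.

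The comparison you invoke is not available. With your formula $v=S-\ell_\epsilon(1-\varphi_K)$ one gets $v=m-\epsilon$ on $\partial\Omega$ and $v=S$ on $K$, the opposite of what you state. With the intended $v=S-\ell_\epsilon\varphi_K$ one has $z\le v$ on $\partial\Omega$ but $z>v$ on $K$. Since $z$ is $n$-subharmonic and $v$ is $n$-harmonic in $\Omega\setminus K$, no comparison principle on $\Omega\setminus K$ yields anything. Monotonicity does not help either when you test with $\varphi_K$. It controls $\langle A(\nabla z)-A(\nabla v),\nabla z-\nabla v\rangle$, where $A(\xi)=|\xi|^{n-2}\xi$, not the pairing with $\nabla\varphi_K$.

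Since $\int_\Omega f\varphi_K\ge\mu(K)$, your inequality is strictly stronger than the lemma, and for $n\ge3$ it fails. In $\mathbb{R}^n$ take $\Omega=B_R$ and $K=\overline{B}_\rho$. Let $z$ be a radial solution of $\Delta_n z=f$ with $f\ge0$ concentrated near $\{|x|=r_1\}$, $\rho<r_1<R$, and $\int f\,dx=\sigma_{n-1}Q$. Then $z$ is constant near $K$ and radially increasing, with $\ell\approx Q^{1/(n-1)}\log(R/r_1)$. Moreover $\operatorname{Cap}_n(K,\Omega)=\sigma_{n-1}(\log(R/\rho))^{1-n}$ and $\varphi_K(r_1)=t:=\log(R/r_1)/\log(R/\rho)\in(0,1)$. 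For small $\epsilon$ this gives
\[ \int_\Omega f\varphi_K\approx\sigma_{n-1}Q\,t>\sigma_{n-1}Q\,t^{n-1}\approx\ell_\epsilon^{n-1}\operatorname{Cap}_n(K,\Omega). \]
The lemma itself holds trivially in this example, since $\mu(K)=0$.

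\textbf{How to close it.} Your comparison idea does work if you test $\Delta_n z-\Delta_n v$, with $v=S-\ell_\epsilon\varphi_K$, against $\psi_\delta=\min\{1,(z-v)^+/\delta\}$ for $0<\delta\le\epsilon$. This is precisely your earlier truncation $\min\{1,(\ell\varphi-(S-z))^+/\epsilon\}$ with $\varphi=\varphi_K$. The steps are:
\begin{itemize}
\item Since $0\le(z-v)^+\le\ell_\epsilon\varphi_K$, we have $\psi_\delta\in W^{1,n}_0(\Omega)$.
\item $\psi_\delta=1$ a.e. on $K$.
\item $\Delta_n v=\ell_\epsilon^{n-1}\nu_K$, where $\nu_K=-\Delta_n\varphi_K\ge0$ has total mass $\operatorname{Cap}_n(K,\Omega)$.
\item Monotonicity then gives $\mu(K)\le\int f\psi_\delta\le\ell_\epsilon^{n-1}\int\psi_\delta\,d\nu_K\le\ell_\epsilon^{n-1}\operatorname{Cap}_n(K,\Omega)$.
\end{itemize}
Your Young's-inequality attempt failed only because you tested the equation for $z$ alone rather than the difference. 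This route still needs the existence and properties of the capacitary potential and its Riesz measure. It also needs the identification of its energy with the paper's capacity, which is defined via competitors equal to $1$ near $K$.

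The paper's proof is entirely elementary. Set $v=\sup_\Omega z-z+\varepsilon\ge\varepsilon$, so that $-\Delta_n v=f$. Test with $\varphi^n v^{1-n}$ for an \emph{arbitrary} admissible $\varphi$, and use the pointwise inequality $na^{n-1}b-(n-1)a^n\le b^n$ with $a=\varphi|\nabla v|/v$ and $b=|\nabla\varphi|$. This yields $\int_\Omega\varphi^n v^{1-n}f\le\int_\Omega|\nabla\varphi|^n$. Since $v\le\sup_\Omega z-\inf_K z+\varepsilon$ a.e. on $K$, taking the infimum over $\varphi$ and letting $\varepsilon\to0$ gives the claim with constant exactly $1$.

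No continuity of $z$ is needed on either route. The regularity you quote from the proof of Lemma \ref{lem_ineq} concerns solutions of \eqref{eq_crit}, not the general $z$ of this lemma.
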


\begin{remark}\label{rem1}
  We note that in particular, under the assumptions of Lemma \ref{lem_cap}, since $\Delta_n z\ge0$ then by the weak maximum principle for
$n$-subharmonic functions we have
$$
\mu(K)\leq\operatorname{Cap}_n(K,\Omega)\left(\sup_{\partial\Omega} z-\inf_{K} z\right)^{n-1},
$$
whenever the boundary essential supremum is well defined, for instance if
$z$ admits a continuous representative up to $\partial\Omega$.
\end{remark}

\begin{proof}
Set
$$
L:=\sup_{\Omega} z, \qquad m:=\inf_{K} z .
$$
Fix $\varepsilon>0$, and define
$$
v:=L-z+\varepsilon .
$$
Then $v\ge\varepsilon>0$ a.e. in $\Omega$ and
\begin{equation}\label{14}
-\Delta_n v=f
\end{equation}
in the weak sense in $\Omega$. Let
$\varphi\in W^{1,n}_0(\Omega)$, $0\leq\varphi\leq1$ a.e in $\Omega$, $\varphi=1$ in a neighbourhood of $K$ and define
$$
\eta:=\varphi^n v^{1-n}.
$$
Then $\eta\geq0$ a.e in $\Omega$, $\eta\in W^{1,n}_0(\Omega)\cap L^\infty(\Omega)$ and by \eqref{14}
$$
\int_\Omega \eta f\,dV_g = \int_\Omega |\nabla v|^{n-2}\langle\nabla v,\nabla\eta\rangle\,dV_g.
$$
Thus we obtain
$$
\begin{aligned}
\int_\Omega \varphi^n v^{1-n}f\,dV_g &=n\int_\Omega\varphi^{n-1}v^{1-n}|\nabla v|^{n-2}\langle\nabla v,\nabla\varphi\rangle\,dV_g-(n-1)\int_\Omega
\varphi^n v^{-n}|\nabla v|^n\,dV_g\\
&\leq n\int_\Omega\varphi^{n-1}v^{1-n}|\nabla v|^{n-1}|\nabla\varphi|\,dV_g-(n-1)\int_\Omega\varphi^n v^{-n}|\nabla v|^n\,dV_g.
\end{aligned}
$$
by the Cauchy-Schwarz inequality. We now use the elementary inequality
$$
n a^{n-1}b-(n-1)a^n\le b^n \qquad\text{for all }a,b\ge0
$$
with
$$
a:=\frac{\varphi|\nabla v|}{v},\qquad b:=|\nabla\varphi|,
$$
and thus we obtain
$$
n\varphi^{n-1}v^{1-n}|\nabla v|^{n-1}|\nabla\varphi|-(n-1)\varphi^n v^{-n}|\nabla v|^n\leq|\nabla\varphi|^n
$$
a.e. in $\Omega$. Therefore
\begin{equation}\label{27}
\int_\Omega \varphi^n v^{1-n}f\,dV_g\leq\int_\Omega |\nabla\varphi|^n\,dV_g\, . 
\end{equation}
On the other hand we have
$$
v^{1-n}\ge (L-m+\varepsilon)^{1-n}
$$
a.e. on $K$.
Moreover, $\varphi=1$ in a neighbourhood of $K$, hence
\begin{equation}\label{28}
\int_\Omega \varphi^n v^{1-n}f\,dV_g\geq\int_K \varphi^n v^{1-n}f\,dV_g\geq(L-m+\varepsilon)^{1-n}\mu(K).
\end{equation}
Combining \eqref{27} and \eqref{28} we obtain
$$
\mu(K)\leq(L-m+\varepsilon)^{n-1}\int_\Omega |\nabla\varphi|^n\,dV_g.
$$
Taking the infimum over all admissible $\varphi$, we get
$$
\mu(K)
\le
(L-m+\varepsilon)^{n-1}
\operatorname{Cap}_n(K,\Omega).
$$
Letting $\varepsilon$ tend to $0$, we conclude that
$$
\mu(K)\leq\operatorname{Cap}_n(K,\Omega)\left(\sup_{\Omega} z-\inf_{K} z\right)^{n-1}.
$$
and the proof is complete.
\end{proof}

\begin{lemma}\label{lem_cle}
Let $(M^n,g)$ be a complete noncompact Riemannian manifold with $\operatorname{Ric}\ge0$ and positive asymptotic volume ratio $\theta=\operatorname{AVR}(M,g)>0$. Let $u$ be a weak solution of
$$
-\Delta_n u=e^u
$$
such that
$$
\mathcal{M}:=\int_Me^u\,dV_g<+\infty.
$$
Then, for every fixed $o\in M$,
\begin{equation}\label{25}
\liminf_{R\to+\infty}\frac{-\inf_{\partial B_R(o)}u}{\log R}\geq\left(\frac{\mathcal{M}}{\theta\sigma_{n-1}}\right)^{\frac1{n-1}}.
\end{equation}
\end{lemma}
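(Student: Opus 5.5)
We apply Lemma \ref{lem_cap} (in the form of Remark \ref{rem1}) to $z:=-u$ on annular-type domains. We take $\Omega:=B_R(o)$ and $K:=\overline{B_{\rho}(o)}$, with $\rho$ fixed and large and $R\to+\infty$. Then $\Delta_n z=e^u=:f\ge0$, and $z$ is continuous up to $\partial B_R$ since $u\in C^{1,\alpha}_{\rm loc}$. Remark \ref{rem1} gives
$$
\int_{B_\rho}e^u\,dV_g\le \operatorname{Cap}_n\big(\overline{B_\rho},B_R\big)\Big(\sup_{\partial B_R}(-u)-\inf_{B_\rho}(-u)\Big)^{n-1}.
$$
Here $\sup_{\partial B_R}(-u)=-\inf_{\partial B_R}u$, and $\inf_{B_\rho}(-u)=-\sup_{B_\rho}u$ is a finite constant depending only on $\rho$. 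Consequently
$$
-\inf_{\partial B_R}u\ \ge\ \Big(\frac{\mu(B_\rho)}{\operatorname{Cap}_n(\overline{B_\rho},B_R)}\Big)^{\frac1{n-1}}-C_\rho .
$$

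The key step is the capacity upper bound
$$
\operatorname{Cap}_n(\overline{B_\rho},B_R)\le \theta^{-1}\sigma_{n-1}^{-1}\cdots
$$
in the sharp form
$$
\operatorname{Cap}_n(\overline{B_\rho},B_R)\le \frac{\sigma_{n-1}\,\theta_\rho}{(\log (R/\rho'))^{n-1}}(1+o(1)),
$$
where the volume ratio on the relevant scales is close to $\theta$. We test with the radial logarithmic cut-off
$$
\varphi(x)=\frac{\log(R/r(x))}{\log(R/\rho)}
$$
for $\rho\le r\le R$, with $\varphi=1$ on $B_\rho$ and $\varphi=0$ outside $B_R$. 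To make $\varphi=1$ in a neighbourhood of $K$, we shrink slightly, which is harmless. Since $|\nabla r|=1$ a.e.,
$$
\int|\nabla\varphi|^n\,dV_g=\frac{1}{(\log(R/\rho))^{n}}\int_{B_R\setminus B_\rho}r^{-n}\,dV_g .
$$
By the coarea formula and layer-cake in $r$ this equals
$$
\frac{1}{(\log (R/\rho))^n}\int_\rho^R t^{-n}|\partial B_t|\,dt .
$$
We write $|\partial B_t|=\frac{d}{dt}\operatorname{Vol}(B_t)$ and integrate by parts, or directly use the fact that $\operatorname{Vol}(B_t)/(\omega_nt^n)\downarrow\theta$. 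Then for any $\varepsilon>0$ we have $\operatorname{Vol}(B_t)\le(1+\varepsilon)\theta\omega_nt^n$ once $t\ge t_\varepsilon$. Integrating by parts,
$$
\int_\rho^R t^{-n}\,d\operatorname{Vol}(B_t)=\Big[t^{-n}\operatorname{Vol}(B_t)\Big]_\rho^R+n\int_\rho^R t^{-n-1}\operatorname{Vol}(B_t)\,dt\le \omega_n+n(1+\varepsilon)\theta\omega_n\log(R/\rho)
$$
for $\rho\ge t_\varepsilon$. Since $n\omega_n=\sigma_{n-1}$, this yields
$$
\operatorname{Cap}_n(\overline{B_\rho},B_R)\le\frac{(1+\varepsilon)\theta\sigma_{n-1}\log(R/\rho)+\omega_n}{(\log(R/\rho))^n}.
$$

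Inserting this bound, dividing by $\log R$, and letting $R\to\infty$ with $\rho$ fixed, the constants $C_\rho$ and $\omega_n$ disappear, and $\log(R/\rho)/\log R\to1$. We obtain
$$
\liminf_{R\to\infty}\frac{-\inf_{\partial B_R}u}{\log R}\ge\Big(\frac{\mu(B_\rho)}{(1+\varepsilon)\theta\sigma_{n-1}}\Big)^{\frac1{n-1}}.
$$
Finally we let $\rho\to\infty$, so that $\mu(B_\rho)\to\mathcal M$ by monotone convergence, and then let $\varepsilon\to0$. This gives \eqref{25}.

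We expect the main obstacle to be this sharp capacity estimate, that is, getting exactly the constant $\theta\sigma_{n-1}$ rather than a crude $\sigma_{n-1}$. This is handled by integrating against $\operatorname{Vol}(B_t)$ rather than $|\partial B_t|$, which avoids any need for a pointwise area asymptotic, together with the monotone convergence in Bishop--Gromov. A secondary technical point is the admissibility of $\varphi$ (Lipschitz, in $W^{1,n}_0(B_R)$, and equal to $1$ near $K$). This is fixed by using $\overline{B_{\rho}}\subset B_{\rho+1}$ and the cut-off starting at $\rho+1$, which changes nothing asymptotically.
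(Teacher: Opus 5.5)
Your proof is correct and follows the paper's argument essentially step for step. The steps match: Remark \ref{rem1} applied to $z=-u$ on $B_R(o)$ with $K$ a ball of radius $\rho$, the logarithmic cut-off, then coarea and integration by parts against $\operatorname{Vol}(B_t)$ with Bishop--Gromov, and finally $R\to\infty$, then $\rho\to\infty$, then $\varepsilon\to0$. Taking $K=\overline{B_\rho(o)}$ with the cut-off starting at radius $\rho+1$ is slightly more careful about admissibility than the paper's choice $K=B_\rho(o)$, and you should delete the stray heuristic line with $\theta^{-1}\sigma_{n-1}^{-1}$, since your actual computation correctly gives $\theta\sigma_{n-1}$ in the numerator.
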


\begin{proof}
Set $z:=-u$, then
$$
\Delta_n z=f:=e^u\geq0
$$
in the weak sense and $z$ is continuous. For any measurable set $E\subset M$ let $\mu(E):=\int_E f\,dV_g$, then we have $\mu(M)=\mathcal{M}<+\infty$. Fix $R>3\rho>0$, applying Lemma \ref{lem_cap} and Remark \ref{rem1} with $K=B_\rho(o)$ and $\Omega=B_R(o)$ we deduce
\begin{equation}\label{313}
-\inf_{\partial B_R(o)}u=\sup_{\partial B_R(o)}z\geq\inf_{B_\rho(o)}z+\left(\frac{\mu(B_\rho(o))}{\operatorname{Cap}_n(B_\rho(o),B_R(o))}\right)^{\frac1{n-1}}.
\end{equation}
We now estimate the capacity from above. Consider the Lipschitz radial cut-off
$$
\varphi_R(x)=\frac{\log R-\log r(x)}{\log R-\log\rho} \qquad\textrm{on }B_R(o)\setminus B_\rho(o)\,,
$$
extended by $1$ on $B_\rho(o)$ and by $0$ outside $B_R(o)$. Then by a standard approximation argument $\varphi_R$ is an admissible test function and
$$
|\nabla\varphi_R|=\frac{1}{r(x)\log(R/\rho)}\qquad\textrm{a.e. in }B_R(o)\setminus B_\rho(o)\,.$$
Hence
\begin{equation}\label{32}
\operatorname{Cap}_n(B_\rho(o),B_R(o))\leq\frac{1}{\log(R/\rho)^n}\int_{B_R(o)\setminus B_\rho(o)}\frac{1}{r(x)^n}\,dV_g\,.
\end{equation}
By the coarea formula,
$$
\int_{B_R(o)\setminus B_\rho(o)}\frac{1}{r(x)^n}\,dV_g=\int_\rho^R\frac{A(t)}{t^n}\,dt,
$$
where $A(t)=\mathcal H^{n-1}(\partial B_t(o))$ for a.e. $t$. If $V(t):=\operatorname{Vol}(B_t(o))$ then $A(t)=V'(t)$ for a.e. $t$ and, integrating by parts, we obtain
\begin{equation}\label{34}
\int_{B_R(o)\setminus B_\rho(o)}\frac{1}{r(x)^n}\,dV_g=\int_\rho^R\frac{V'(t)}{t^n}\,dt=\frac{V(R)}{R^n}-\frac{V(\rho)}{\rho^n}+n\int_\rho^R\frac{V(t)}{t^{n+1}}\,dt.
\end{equation}
Since
$$
\frac{V(t)}{\omega_n t^n}\rightarrow\theta^+ \qquad\text{as }t\rightarrow+\infty
$$
for every $\varepsilon>0$ there exists $\rho_0$ such that for every $t>\rho_0$
$$
\theta\omega_n t^n\leq V(t)\leq(\theta+\tfrac{\varepsilon}{2})\omega_n t^n\,.
$$
Let $R>3\rho$ with $\rho>\rho_0$, then
\begin{equation}\label{33}
\int_\rho^R\frac{V'(t)}{t^n}\,dt\leq\tfrac{\varepsilon}{2}\omega_n+ n(\theta+\tfrac{\varepsilon}{2})\omega_n\log\frac{R}{\rho}\leq n(\theta+\varepsilon)\omega_n\log\frac{R}{\rho}\,.
\end{equation}
Combining \eqref{32}, \eqref{34} and \eqref{33}, we get
\begin{equation}\label{36}
\operatorname{Cap}_n(B_\rho(o),B_R(o))\leq\frac{n(\theta+\varepsilon)\omega_{n}}{\left(\log \frac{R}{\rho}\right)^{n-1}}=\frac{(\theta+\varepsilon)\sigma_{n-1}}{\left(\log \frac{R}{\rho}\right)^{n-1}}\,.
\end{equation}
Inserting \eqref{36} into \eqref{313}, dividing by $\log R$ and letting
$R$ go to $\infty$, we obtain for every $\varepsilon>0$, $\rho>\rho_0$
$$
\liminf_{R\to+\infty}\frac{-\inf_{\partial B_R(o)}u}{\log R}\geq\left(\frac{\mu(B_\rho(o))}{(\theta+\varepsilon)\sigma_{n-1}}\right)^{\frac1{n-1}}\,.
$$
Finally, letting $\rho$ go to $\infty$
$$
\liminf_{R\to+\infty}\frac{-\inf_{\partial B_R(o)}u}{\log R}\geq\left(\frac{\mathcal{M}}{(\theta+\varepsilon)\sigma_{n-1}}\right)^{\frac1{n-1}}\,.
$$
for every $\varepsilon>0$. Given the arbitrariness of $\varepsilon$, we get \eqref{25}.
\end{proof}

We present now the proof of Proposition \ref{prop_log_lb}.

\begin{proof}[Proof of Proposition \ref{prop_log_lb}]
Set $z:=-u$, let $f:=e^u$ and for any measurable set $E\subset M$ let $\mu(E):=\int_Ef\,dV_g$. Note that $u$, and thus also $z$, is continuous. Since $e^u>0$, we can choose a smooth relatively compact domain $K\Subset M$ such that
$$
\mu(K)>0.
$$
Fix $\rho>R_0$ such that $K\subset B_\rho(o)$ and let $R>2\rho$. Applying Lemma \ref{lem_cap} and Remark \ref{rem1} to $z$ with
$\Omega=B_R(o)$, we get
\begin{equation}\label{45}
\mu(K) \leq\operatorname{Cap}_n(K,B_R(o))\left(\sup_{\partial B_R(o)} z-\inf_{K}z\right)^{n-1}.
\end{equation}
By the logarithmic lower bound \eqref{41}, for $R$ large enough,
$$
z(x)=-u(x)\le \beta\log R\qquad\text{on }\partial B_R(o)\,.
$$
Therefore
$$
\left(\sup_{\partial B_R(o)} z-\inf_{K}z\right)\leq C_K+\beta\log R\leq C\log R
$$
for all large $R$. Since $\mu(K)>0$, \eqref{45} gives
\begin{equation}\label{46}
\operatorname{Cap}_n(K,B_R(o))\geq\frac{C}{(\log R)^{n-1}}
\end{equation}
for some constant $C>0$ independent of $R$. We now estimate the same capacity from above. Since $K\subset B_\rho(o)$, the
radial logarithmic cut-off
$$
\varphi_R(x)=\frac{\log R-\log r(x)}{\log R-\log \rho}
$$
on $B_R(o)\setminus B_\rho(o)$, extended by $1$ on $B_\rho(o)$ and by $0$
outside $B_R(o)$, is an admissible competitor. Hence
\begin{equation}\label{47}
\operatorname{Cap}_n(K,B_R(o))\leq\frac{1}{\log(R/\rho)^n}\int_{B_R(o)\setminus B_\rho(o)}\frac{1}{r(x)^n}\,dV_g.
\end{equation}
Combining \eqref{46} and \eqref{47}, we obtain
\begin{equation}\label{48}
\int_{B_R(o)\setminus B_\rho(o)}\frac{1}{r(x)^n}\,dV_g\geq C\frac{(\log R-\log\rho)^{n}}{(\log R)^{n-1}}\, , 
\end{equation}
for all large $R$ and $\rho$. Let $V(R):=\operatorname{Vol}(B_R(o))$. Using the coarea formula and integrating by parts we obtain
\begin{equation}\label{49}
\int_{B_R(o)\setminus B_\rho(o)}\frac{1}{r(x)^n}\,dV_g= \int_\rho^R\frac{V'(t)}{t^n}\,dt=\frac{V(R)}{R^n}-\frac{V(\rho)}{\rho^n}+n\int_\rho^R\frac{V(t)}{t^{n+1}}\,dt\,.
\end{equation}
Now assume by contradiction that
$$
\operatorname{AVR}(M,g)=0.
$$
By Bishop--Gromov monotonicity this is equivalent to
\begin{equation}\label{50}
\frac{V(t)}{t^n}\longrightarrow0\qquad\text{as }t\to+\infty.
\end{equation}
For every $\varepsilon>0$ there exists $R^*>R_0$ such that
$$
0<\frac{V(t)}{t^n}<\varepsilon\qquad\text{for every }t>R^*.
$$
Then if $R>2\rho>2R^*$ we obtain
$$
0<\frac{1}{\log R}\int_\rho^R\frac{V(t)}{t^{n+1}}\,dt\leq\frac{\varepsilon}{\log R}\int_\rho^R\frac{1}{t}\,dt\leq\varepsilon\,.
$$
We deduce that if $\rho>R^*$
\begin{equation}\label{51}
\int_\rho^R\frac{V(t)}{t^{n+1}}\,dt=o(\log R)\qquad\text{as }R\rightarrow\infty\,.
\end{equation}
Therefore \eqref{49}, \eqref{50} and \eqref{51} give
$$
\int_{B_R(o)\setminus B_\rho(o)}\frac{1}{r(x)^n}\,dV_g=o(\log R),
$$
which contradicts \eqref{48}. Hence
$$
\operatorname{AVR}(M,g)>0\,.
$$
It remains to prove the one-endedness conclusion. If $M$ had at least two ends,
then by the Cheeger--Gromoll splitting theorem (see e.g. \cite[Section 4.3]{pet}),
$$
(M,g)\simeq (N^{n-1}\times\mathbb R,\,g_N+dt^2),
$$
with $N$ compact. Hence
$$
\operatorname{Vol}(B_R(o))=O(R),
$$
and therefore
$$
\operatorname{AVR}(M,g)=0,
$$
contradicting what we have just proved. Thus $M$ has only one end, and the proof is complete.
\end{proof}

Finally, we give the proof of Theorem \ref{teo2}.

\begin{proof}[Proof of Theorem \ref{teo2}]
We start noting that by Proposition \ref{prop_log_lb} we have $\operatorname{AVR}(M,g)>0$. By Lemma \ref{lem_cle},
$$
\liminf_{R\to+\infty}\frac{-\inf_{\partial B_R(o)}u}{\log R}\geq\left(\frac{\mathcal{M}}{\theta\sigma_{n-1}}\right)^{\frac1{n-1}}.
$$
On the other hand, the asymptotic lower bound \eqref{37} gives
$$
-\inf_{\partial B_R(o)}u\leq\frac{n^2}{n-1}\log R+o(\log R).
$$
Therefore,
$$
\left(\frac{\mathcal{M}}{\theta\sigma_{n-1}}\right)^{\frac1{n-1}}\leq \frac{n^2}{n-1}.
$$
Equivalently,
\begin{equation}\label{38}
\mathcal{M}^{\frac1{n-1}}\leq\frac{n^2}{n-1}\left(\theta\sigma_{n-1}\right)^{\frac1{n-1}}.
\end{equation}
By Lemma \ref{lem_mlb}, the opposite inequality holds:
\begin{equation}\label{39}
\mathcal{M}^{\frac1{n-1}}\geq\frac{n^2}{n-1}\left(\theta\sigma_{n-1}\right)^{\frac1{n-1}}.
\end{equation}
Hence equality holds in \eqref{38}--\eqref{39}.

Inspecting the proof of Lemma \ref{lem_mlb}, equality in the final
mass inequality implies equality in the sharp isoperimetric inequality
$$
|\partial\Omega_t|^{\frac n{n-1}}=n\left(\theta\sigma_{n-1}\right)^{\frac1{n-1}}\operatorname{Vol}(\Omega_t)
$$
with $\Omega_t=\{x\in M:u(x)>t\}$ for a.e. regular value $t$ such that $0<\operatorname{Vol}(\Omega_t)<+\infty$.
By the equality case in the sharp isoperimetric inequality on complete manifolds
with nonnegative Ricci curvature and positive asymptotic volume ratio, this
forces $(M,g)$ to be isometric to the Euclidean space, see \cite{Bre} and \cite{BaKr}. Then $u$ is a solution of \eqref{eq_crit} in $\mathbb{R}^n$ satisfying \eqref{40}. By the classification result in \cite{Esp} we have $u=\mathcal{U}_{\lambda,x_0}$ for some $x_0\in\mathbb{R}^n$, $\lambda>0$, with $\mathcal{U}_{\lambda,x_0}$ as in \eqref{Tal_n}. This completes the proof.
\end{proof}

We present here some examples showing the sharpness of the assumptions in Proposition \ref{prop_log_lb} and Theorem \ref{teo2}.

\begin{example}\label{exe2}
Let $(N^{n-1},h)$ be a compact Riemannian manifold with
$$
\operatorname{Ric}_h\ge0,
$$
and consider the product manifold
$$
(M^n,g)=(N^{n-1}\times\mathbb R,\;h+dt^2).
$$
Then $\operatorname{Ric}_g\ge0$. We construct weak solutions of
$$
-\Delta_n u=e^u\qquad\text{in }M
$$
with linear decay at infinity. Let $u=u(t)$. Then
$$
\Delta_n u=\left(|u'|^{n-2}u'\right)'
$$
and the equation becomes
\begin{equation}\label{52}
-\left(|u'|^{n-2}u'\right)'=e^u\qquad\text{on }\mathbb R.
\end{equation}
Fix $\kappa\in\mathbb R$. We look for a solution of \eqref{52} on $[0,\infty)$ satisfying
$$
u(0)=\kappa,\qquad u'(0)=0\,.
$$
We note that $\left(|u'|^{n-2}u'\right)'<0$, hence $|u'|^{n-2}u'$ is strictly decreasing and
$$
|u'(t)|^{n-2}u'(t)<|u'(0)|^{n-2}u'(0)=0\qquad\text{for every }t>0\,.
$$
Thus $u'<0$ and $u$ is strictly decreasing for $t>0$. Equation \eqref{52} admits the first integral
\begin{equation}\label{54}
\frac{n-1}{n}|u'|^n+e^u=e^\kappa.
\end{equation}
Hence, for $t>0$,
$$
-u'(t)=\left(\frac{n}{n-1}\left(e^\kappa-e^{u(t)}\right)\right)^{1/n}\,.
$$
Equivalently, $u$ is defined implicitly by
\begin{equation}\label{53}
t=\int_{u(t)}^\kappa\frac{ds}{\left(\frac{n}{n-1}\left(e^\kappa-e^s\right)\right)^{1/n}}\qquad\text{for }t\ge0.
\end{equation}
The integral is finite near $s=\kappa$, because $e^\kappa-e^s\sim e^\kappa(\kappa-s)$ as $s$ tends to $\kappa$, and $u$ is defined for every $t\geq0$. Note that
$$
\int_{-\infty}^\kappa\frac{ds}{\left(\frac{n}{n-1}\left(e^\kappa-e^s\right)\right)^{1/n}}=+\infty,
$$
since the function under integral converges to a positive constant as $s$ tends to $-\infty$, as the denominator tends to
$$
\left(\frac{n}{n-1}e^\kappa\right)^{1/n}.
$$
Therefore \eqref{53} defines a global decreasing solution on \([0,+\infty)\), and we
extend it evenly on $\mathbb{R}$ by setting
$$
u(-t)=u(t).
$$
Then $u\in C^1(\mathbb R)$, $|u'|^{n-2}u'\in C^1(\mathbb R)$, and $u$ is a
weak solution of \eqref{52} on the whole real line. Consequently, the function
$$
u(y,t):=u(t)
$$
is a weak solution of
$$
-\Delta_n u=e^u
$$
on $N\times\mathbb R$. Moreover, since $u(t)\to-\infty$ as $t\to+\infty$, \eqref{54} gives
$$
-u'(t)\rightarrow\left(\frac{n}{n-1}e^\kappa\right)^{1/n}=:a_\kappa>0.
$$
Hence
$$
u(t)\sim-a_\kappa |t|\qquad\text{as }|t|\to+\infty.
$$
Since $N$ is compact, the geodesic distance from a fixed point of \(N\times\mathbb R\) satisfies
$$
r(y,t)\sim |t|\qquad\text{as }|t|\to+\infty.
$$
Therefore
$$
u(y,t)\sim-a_\kappa r(y,t)\qquad\text{as }r(y,t)\to+\infty.
$$
In particular, the decay is linear, hence strictly faster than logarithmic.

Finally,
$$
\int_{N\times\mathbb R}e^u\,dV_g=\operatorname{Vol}(N,h)\int_{\mathbb R}e^{u(t)}\,dt<+\infty,
$$
because $e^{u(t)}$ decays exponentially as $|t|\to+\infty$.
Thus cylindrical manifolds with nonnegative Ricci curvature support finite-mass
solutions, but these solutions have faster-than-logarithmic decay.
\end{example}

\begin{example}\label{exe3}
The one-dimensional construction above also gives solutions on
$\mathbb R^n$ with infinite mass and non-logarithmic behaviour at infinity. Indeed, let
$U=U(t)$ be the even solution of
$$
-\left(|U'|^{n-2}U'\right)'=e^U\qquad\text{in }\mathbb R
$$
constructed in Example \ref{exe2}. Then
$$
u(x):=U(x_1),\qquad x=(x_1,x')\in\mathbb R\times\mathbb R^{n-1},
$$
is a weak solution of
$$
-\Delta_n u=e^u\qquad\text{in }\mathbb R^n.
$$
Moreover,
$$
U(t)\sim-a|t|\qquad\text{as } |t|\to+\infty
$$
for some $a>0$. Hence $u$ has linear decay in the $x_1$-direction, but it
does not decay along the transverse directions. In particular, $u$ does not
satisfy any radial logarithmic asymptotic profile of the form
$$
u(x)\sim-\gamma\log |x|\qquad\text{as } |x|\to+\infty.
$$
Its mass is infinite, since
$$
\int_{\mathbb R^n}e^u\,dx=\left(\int_{\mathbb R}e^{U(t)}\,dt\right)\left(\int_{\mathbb R^{n-1}}dx'\right)=+\infty\,.
$$
\end{example}

\begin{example}\label{exe4}
We construct complete rotationally symmetric manifolds with
nonnegative Ricci curvature and zero asymptotic volume ratio carrying finite-mass
solutions of
$$
-\Delta_n u=e^u
$$
whose decay is barely faster than logarithmic.

Let $R_0>0$ and $\ell:[R_0,+\infty)\to(0,+\infty)$ be a smooth function such that
$$
\ell(r)\to+\infty,\qquad
\ell(r)=o\big((\log r)^a\big)
\qquad\text{as }r\to+\infty
$$
for every $a>0$, and assume that $\frac{r}{\ell(r)}$
is increasing, concave, and satisfies
$$
0<\left(\frac{r}{\ell(r)}\right)'\leq1
$$
Choose a smooth function
$$
\psi:[0,+\infty)\to[0,+\infty)
$$
such that
$$
\psi(r)=r\quad\text{for }r\leq1,\qquad\psi(r)=\frac{r}{\ell(r)}\quad\text{for }r\geq R_0,
$$
and
$$
0<\psi'(r)\le1,\qquad\psi''(r)\le0\qquad\text{for }r>0.
$$
This can be obtained by a smooth concave interpolation between the Euclidean
warping $r$ near the origin and the eventually concave function $\frac{r}{\ell(r)}$
at infinity.

Consider the rotationally symmetric manifold
$$
(M^n,g)=\left(\mathbb R^n,\,dr^2+\psi(r)^2g_{\mathbb S^{n-1}}\right).
$$
The same argument as in Lemma \ref{lem_wpm} shows that $(M,g)$ is complete and that $\operatorname{Ric}\geq0$.

We first compute the asymptotic volume ratio. We have
$$
\operatorname{Vol}(B_R)=\sigma_{n-1}\int_0^R\psi(r)^{n-1}\,dr,
$$
where $\sigma_{n-1}=|\mathbb S^{n-1}|$. Fix $A>0$. Since $\ell(r)\to+\infty$, there exists $R_A>R_0>0$ such that
$$
\ell(r)\geq A \qquad\text{for }r\geq R_A.
$$
Then we have
$$
\psi(r)\leq \frac{r}{A}\qquad\text{for }r\geq R_A.
$$
Thus, for $R>R_A$,
$$
\operatorname{Vol}(B_R)\leq\sigma_{n-1}\int_0^{R_A}\psi(r)^{n-1}\,dr+\sigma_{n-1}\int_{R_A}^R\left(\frac{r}{A}\right)^{n-1}\,dr
\leq C_A+\frac{\sigma_{n-1}}{nA^{n-1}}R^n.
$$
Dividing by \(\omega_nR^n\), and using \(\sigma_{n-1}=n\omega_n\), we get
$$
\limsup_{R\to+\infty}\frac{\operatorname{Vol}(B_R)}{\omega_nR^n}\leq \frac{1}{A^{n-1}}.
$$
Since $A>0$ is arbitrary we obtain
$$
\operatorname{AVR}(M,g)=0.
$$
We now construct a radially decreasing solution. Arguing as in Lemma \ref{lem_wpm_sol} for a
radial $u=u(r)$ we set
$$
y:=-u',\qquad A(r):=\psi(r)^{n-1}y(r)^{n-1}.
$$
Then the equation
$$
-\Delta_nu=e^u
$$
is equivalent to
$$
\begin{cases}
A'(r)=\psi(r)^{n-1}e^{u(r)},\\
u'(r)=-A(r)^{\frac1{n-1}}\psi(r)^{-1}.
\end{cases}
$$
With the regular initial conditions
$$
A(0)=0,\qquad u(0)=\kappa,
$$
and since $\psi(r)=r$ near $0$, the same local construction as in
Lemma \ref{lem_wpm_sol} gives a regular radial solution near the pole.

The proof of Lemma \ref{lem_wpm_sol} applies verbatim to the present
warping function, because it only uses $0\leq\psi'\leq1$. We obtain
$$
A(R)^{\frac n{n-1}}+\frac n{n-1}\psi(R)^ne^{u(R)}=\frac{n^2}{n-1}\int_0^R\psi^{n-1}\psi'e^u\,dr\leq\frac{n^2}{n-1}\int_0^R\psi^{n-1}e^u\,dr=\frac{n^2}{n-1}A(R),
$$
Thus $A$ is increasing and bounded. Consequently the local solution extends
globally, $u$ is strictly decreasing for $r>0$, and
$$
A(r)\rightarrow L\in(0,+\infty)\quad\text{as }r\rightarrow+\infty,\qquad u(r)\rightarrow -\infty\quad\text{as }r\rightarrow+\infty.
$$
We now compute its decay. We have
$$
u'(r)=-\frac{A(r)^{\frac1{n-1}}}{\psi(r)}\sim-\frac{L^{\frac1{n-1}}}{\psi(r)}=-L^{\frac1{n-1}}\frac{\ell(r)}{r}.
$$
Consequently, we have
$$
u(r)\sim-L^{\frac{1}{n-1}}\Phi(r) \qquad\text{as }r\rightarrow\infty\,,
$$
with
$$
\Phi(r):=\int_{R_0}^r\frac{\ell(s)}{s}\,ds\,.
$$
Note that, since $\ell(r)\rightarrow+\infty$,
$$
\frac{\Phi(r)}{\log r}\rightarrow+\infty\,,
$$
that is $\log r=o(\Phi(r))$. Thus the decay of $u$ is faster than logarithmic. On the other hand, by the monotonicity of $\ell$,
$$
\Phi(r)=\int_{R_0}^r\frac{\ell(s)}{s}\,ds\leq\ell(r)\int_{R_0}^r\frac{ds}{s}=\ell(r)\log r+O(\ell(r))\,.
$$
Thus $u$ decays faster than logarithmically, but only barely so.

Note that if we further assume
$$
\frac{\ell'(r)}{\ell(r)}=o\left(\frac{1}{r\log r}\right)\qquad\text{as }r\rightarrow\infty
$$
we have
$$
\lim_{r\rightarrow\infty}\frac{\Phi(r)}{\ell(r)\log r}=\lim_{r\rightarrow\infty}\frac{\frac{\ell(r)}{r}}{\frac{\ell(r)}{r}+\ell'(r)\log r}=1
$$
i.e. $u(r)\sim-L^{\frac{1}{n-1}}\Phi(r)\sim-L^{\frac{1}{n-1}}\ell(r)\log r$ as $r$ tends to infinity. Note that
$$
\ell(r)=\big(\underbrace{\log\log\cdots\log r}_{k\text{ times}}\big)^\beta,
$$
satisfies all the above assumptions for $k=2$, $0<\beta<1$ or $k\geq3$, $\beta>0$.

Finally, the solution has finite mass. Indeed,
$$
\int_M e^u\,dV_g=\sigma_{n-1}\int_0^\infty e^{u(r)}\psi(r)^{n-1}\,dr=\sigma_{n-1}\int_0^\infty A'(r)\,dr=\sigma_{n-1}L<+\infty\,.
$$
\end{example}

\

\appendix

\section{A Liouville-type theorem}\label{app:karp}

We show the following improvement {\em \`a la Karp} of a classical Liouville-type theorem due to Berestycki, Caffarelli and Nirenberg \cite{BCN}.
\begin{lemma}\label{lem_karp} Let $(M,g)$ be a complete noncompact Riemannian manifold of dimension $n\geq 2$. Let $h,E\in L^1_{\text{loc}}(M)$ be nonnegative functions and $T\in L^1_{\text{loc}}(TM)$ be a locally integrable vector field such that
\begin{equation}\label{5}
  \operatorname{div}T\geq E
\end{equation}
in the distributional sense and
\begin{equation}\label{6}
|T|^2\leq C_0hE
\end{equation}
a.e. on $M$ for some positive constant $C_0>0$. Let $F:[1,\infty)\rightarrow(0,\infty)$ be a positive nondecreasing function such that
\begin{equation}\label{9}
\int_{R_0}^\infty\frac{1}{tF(t)}\,dt=+\infty\,.
\end{equation}
If for some $o\in M$
\begin{equation}\label{8}
\limsup_{R\rightarrow\infty}\frac{1}{R^2F(R)}\int_{B_R(o)\setminus B_\frac{R}{2}(o)}h\,dV_g<\infty
\end{equation}
then $E=0$ on $M$.
\end{lemma}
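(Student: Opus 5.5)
We prove Lemma \ref{lem_karp} with a Karp-type test-function argument: we test \eqref{5} against a cutoff that decays slowly on dyadic annuli, and use the divergence \eqref{9} of $\int dt/(tF(t))$ to force $\int E=0$ on every compact set.

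\emph{Local energy inequality.} Fix a nonnegative Lipschitz $\phi$ with compact support. Testing \eqref{5} with $\phi^2$ and using \eqref{6} with Cauchy--Schwarz gives
$$
\int_M \phi^2E\,dV_g\le -\int_M 2\phi\langle T,\nabla\phi\rangle\,dV_g\le 2\left(\int_{\operatorname{supp}\nabla\phi}\phi^2 E\right)^{1/2}\left(C_0\int_M h|\nabla\phi|^2\right)^{1/2}.
$$
The first factor on the right involves only the annular part, where $\nabla\phi\ne0$.

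\emph{Choice of cutoff.} For $R$ large, let $k(R)$ be such that $2^{k}\approx R$, and set $a_j:=1/F(2^j)$. Define $\phi$ radially, depending only on $r(x)$:
\begin{itemize}
\item $\phi\equiv1$ on $B_{R}$;
\item on each annulus $A_j=B_{2^{j}}\setminus B_{2^{j-1}}$ with $j>k$, $\phi$ decreases by $a_j2^{-j}\,\Delta r\big/\sum_{i=k+1}^N a_i$, so that $\phi$ reaches $0$ at $r=2^N$.
\end{itemize}
Thus $|\nabla\phi|\le Ca_j2^{-j}/S_N$ on $A_j$, where $S_N=\sum_{k<i\le N}a_i$. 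Since $F$ is nondecreasing, \eqref{9} is equivalent to $\sum_j 1/F(2^j)=\infty$, so $S_N\to\infty$ as $N\to\infty$. By \eqref{8}, $\int_{A_j}h\le C4^jF(2^j)=C4^j/a_j$. Hence
$$
\int h|\nabla\phi|^2\le \frac{C}{S_N^2}\sum_{j}\frac{a_j^2}{4^j}\cdot\frac{4^j}{a_j}=\frac{C}{S_N}\longrightarrow0 .
$$

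\emph{Concluding.} Set $Q(N):=\int\phi_N^2E$. The energy inequality gives $Q\le 2\sqrt{Q}\sqrt{C/S_N}$, so $Q\le 4C/S_N\to0$. Since $\phi_N\equiv1$ on $B_R$, this yields $\int_{B_R}E=0$ for every $R$, and therefore $E=0$.

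\emph{Technical points.} Two issues need care.
\begin{itemize}
\item The test function $\phi^2$ is only Lipschitz, not smooth. We handle this by mollification, which requires $T\in L^1_{loc}$. Since $\operatorname{div}T\ge E\ge0$ is a distribution that is a nonnegative measure, testing against Lipschitz compactly supported functions is justified by approximation.
\item We need $E\in L^1_{loc}$ and $\int\phi^2E<\infty$, which is given, and the boundedness of $\int_{A_j}h$ for small $j$ is harmless.
\end{itemize}
The main obstacle is matching the annular weights to $F$ so that the $\ell^2$-type sum telescopes into $1/S_N$. Choosing the slope proportional to $1/F(2^j)$ is the first thing to try, and the computation above shows it works. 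If \eqref{8} holds only as a $\limsup$, it suffices for all large $j$; the finitely many small $j$ lie inside the plateau where $\nabla\phi=0$.
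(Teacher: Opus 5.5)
Your proof is correct, but it takes a different route from the paper's.

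\textbf{The paper's route (Karp's iteration).} The paper argues by contradiction, using only standard dyadic cutoffs $\eta_j$: equal to $1$ on $B_{R_j}$, vanishing outside $B_{R_{j+1}}$, with $R_j=2^jR_0$ and $|\nabla\eta_j|\lesssim R_j^{-1}$. It keeps the annular factor $\int_{\operatorname{supp}\nabla\eta_j}E\eta_j^2$ and bounds it by $Q_j-Q_{j-1}$. This gives the recursive inequality $Q_j^2\le CF(R_{j+1})(Q_j-Q_{j-1})$, hence $1/F(R_{j+1})\le C(1/Q_{j-1}-1/Q_j)$. Telescoping yields $\sum_j 1/F(R_j)<\infty$ whenever $E\not\equiv0$, which contradicts \eqref{9}.

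\textbf{Your route.} You move all the $F$-dependence into one tailored cutoff, with slope proportional to $1/F(2^j)$ on each dyadic annulus. Then $\int h|\nabla\phi|^2\lesssim 1/S_N$, and you discard the annular localization of $E$ entirely by bounding it by the full $Q$.

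\textbf{What each buys.} Your argument is direct rather than by contradiction. It also gives a quantitative estimate, $\int_{B_{2^k}}E\le C\big(\sum_{k<j\le N}F(2^j)^{-1}\big)^{-1}$. The paper's argument needs only the standard cutoffs and puts the work into the recursion. Using a piecewise-linear profile on dyadic annuli, rather than the continuous profile $\int dt/(tF(t))$, is what lets you avoid any doubling assumption on $F$. On $A_j$ you apply \eqref{8} with exactly $F(2^j)$, so the ratio $F(2^j)/F(2^{j-1})$ never appears.

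\textbf{A minor slip.} A slope of $a_j2^{-j}/S_N$ on an annulus of width $2^{j-1}$ makes $\phi$ drop only by $1/2$ in total, not to $0$. Doubling the slope fixes this and changes only constants.
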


\begin{proof}
We follow the proof of Karp \cite[Theorem 2.2]{karp}. We argue by contradiction and assume that $E\not\equiv0$. Since
$E\ge0$ and $E\in L^1_{\mathrm{loc}}(M)$, there exists $R_*>R_0$ such that
$$
\int_{B_{R_*}(o)}E\,dV_g>0.
$$
For $0<s<t$, let $\eta=\eta_{s,t}$ be a standard
smooth cut-off function satisfying
$$
0\le \eta\le1,\qquad
\eta\equiv1\ \text{on }B_s(o),\qquad
\eta\equiv0\ \text{on }M\setminus B_t(o),
$$
and
$$
|\nabla\eta|\le \frac{C}{t-s}\, ,
$$
for some constant $C>0$ independent of $s,t,o$. Testing \eqref{5} with $\eta^2\ge0$ and using \eqref{6} we obtain
\begin{align*}
\int_M E\eta^2\,dV_g&\le
-2\int_M \eta\langle T,\nabla\eta\rangle\,dV_g\\
&\le
2\int_{\operatorname{supp}\nabla\eta}
\eta |T|\,|\nabla\eta|\,dV_g\\
&\le
2C_0^{1/2}
\left(
\int_{\operatorname{supp}\nabla\eta} E\eta^2\,dV_g
\right)^{1/2}
\left(
\int_{\operatorname{supp}\nabla\eta} h|\nabla\eta|^2\,dV_g
\right)^{1/2}.
\end{align*}
Therefore
\begin{equation}\label{7}
\left(\int_M E\eta^2\,dV_g\right)^2
\le\frac{C}{(t-s)^2}\left(\int_{B_t(o)\setminus B_s(o)} h\,dV_g\right)\left(\int_{B_t(o)\setminus B_s(o)} E\eta^2\,dV_g\right).
\end{equation}
Let $R_j=2^jR_0$, with $j$ large enough so that $R_j\ge R_*$, set $s=R_{j}$, $t=R_{j+1}$ and let $\eta_j$ be such that
$$
0\leq\eta_j\leq\eta_{j+1}\leq1,\qquad
\eta_j\equiv1 \ \text{on }B_{R_{j}}(o),\qquad
\eta_j\equiv0 \ \text{on }M\setminus B_{R_{j+1}}(o),
$$
and
$$
|\nabla\eta_j|\le \frac{C}{R_{j+1}-R_{j}}
= \frac{C}{R_{j}}.
$$
Let
$$
Q_j:=\int_ME\eta_j^2\,dV_g=\int_{B_{R_{j+1}}(o)}E\eta_j^2\,dV_g.
$$
Then $Q_j>0$ for all large $j$, $Q_j$ is nondecreasing and from \eqref{7} we get for large $j$'s
$$
Q_{j}^2=\left(\int_M E\eta_{j}^2\,dV_g\right)^2\leq
C\left(
\frac{1}{R_{j}^2}
\int_{B_{R_{j+1}}(o)\setminus B_{R_{j}}(o)}h\,dV_g
\right)
\left(\int_{B_{R_{j+1}}(o)\setminus B_{R_j}(o)} E\eta_j^2\,dV_g\right).
$$
By \eqref{8} we have
$$
\frac{1}{R_{j}^2}\int_{B_{R_{j+1}}(o)\setminus B_{R_{j}}(o)}h\,dV_g\leq C F(R_{j+1})
$$
for all large $j$. Thus, since $Q_j$ is nondecreasing in $j$, we obtain
$$
Q_jQ_{j-1}\leq Q_{j}^2\le C F(R_{j+1})\left(Q_{j}-Q_{j-1}\right)
$$
and
$$
\frac{1}{F(R_{j+1})}\leq C\left(\frac{1}{Q_{j-1}}-\frac{1}{Q_{j}}\right).
$$
Then we get
\begin{equation}\label{10}
\sum_{j=j_0}^{+\infty}\frac{1}{F(R_j)}<+\infty .
\end{equation}
Since $F$ is nondecreasing and $R_j=2^jR_0$, we have
$$
\sum_{j=j_0}^\infty \frac{1}{F(R_j)}=\sum_{j=j_0}^\infty \int_{R_j}^{R_{j+1}}\frac{1}{R_jF(R_j)}\,dt
\geq\sum_{j=j_0}^\infty \int_{R_j}^{R_{j+1}}\frac{1}{tF(t)}\,dt=\int_{R_{j_0}}^\infty\frac{1}{tF(t)}\,dt\,.
$$
This and \eqref{10} contradict \eqref{9}. Therefore $E\equiv0$ on $M$.
\end{proof}

As an application of the previous lemma, we obtain the following weighted
Karp-type result, which is not used in this paper but may be of independent
interest.

\begin{corollary}\label{cor_karp}
Let $(M,g)$ be a complete noncompact Riemannian manifold.
Let $a\ge 0$ be a measurable locally integrable weight and let
$\sigma\in H^1_{\mathrm{loc}}(M)$ be such that
$$
a\sigma^2\in L^1_{\mathrm{loc}}(M), \qquad a|\nabla\sigma|^2\in L^1_{\mathrm{loc}}(M), \qquad a\sigma\nabla\sigma\in L^1_{\mathrm{loc}}(TM).
$$
Assume that
$$
\sigma\,\operatorname{div}(a\nabla\sigma)\ge 0
$$
in the sense of distributions. Let $F:[R_0,+\infty)\to(0,+\infty)$ be positive and nondecreasing, and assume that
$$
\int_{R_0}^{+\infty}\frac{dt}{tF(t)}=+\infty .
$$
If there exist $o\in M$ and a constant $C>0$ such that
\begin{equation}\label{11}
\int_{B_{2R}(o)\setminus B_R(o)} a\sigma^2\,dV_g \le C R^2F(R)
\end{equation}
for every $R\ge R_0$, then
$$
a|\nabla\sigma|^2\equiv 0
$$
on $M$. In particular, if $a>0$ almost everywhere and $M$ is connected, then
$\sigma$ is constant.
\end{corollary}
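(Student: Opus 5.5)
The plan is to apply Lemma \ref{lem_karp} directly, with
$$
T:=a\sigma\nabla\sigma,\qquad E:=a|\nabla\sigma|^2,\qquad h:=a\sigma^2 .
$$
By the integrability assumptions, $T\in L^1_{\mathrm{loc}}(TM)$ and $h,E\in L^1_{\mathrm{loc}}(M)$ are nonnegative. The pointwise bound \eqref{6} is immediate with $C_0=1$, since
$$
|T|^2=a^2\sigma^2|\nabla\sigma|^2=(a\sigma^2)(a|\nabla\sigma|^2)=hE .
$$

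The structural inequality \eqref{5} comes from the product rule
$$
\operatorname{div}(a\sigma\nabla\sigma)=a|\nabla\sigma|^2+\sigma\operatorname{div}(a\nabla\sigma)\ge a|\nabla\sigma|^2=E .
$$
At the distributional level, the hypothesis $\sigma\operatorname{div}(a\nabla\sigma)\ge0$ must be read so that this identity holds. I would interpret it as the distribution $\operatorname{div}(a\sigma\nabla\sigma)-a|\nabla\sigma|^2$, which is well defined because $a\sigma\nabla\sigma\in L^1_{\mathrm{loc}}$ and $a|\nabla\sigma|^2\in L^1_{\mathrm{loc}}$. Equivalently, for a nonnegative test function $\varphi$,
$$
-\int_M a\sigma\langle\nabla\sigma,\nabla\varphi\rangle\,dV_g-\int_M a|\nabla\sigma|^2\varphi\,dV_g\ge0 .
$$
With this reading, \eqref{5} is exactly the hypothesis. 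Justifying that this is the natural meaning is the only delicate point; if needed, I would approximate $\sigma$ by mollifications or truncations and pass to the limit using the three $L^1_{\mathrm{loc}}$ conditions.

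Next I would check the growth condition \eqref{8}. The annulus $B_R\setminus B_{R/2}$ equals $B_{2R'}\setminus B_{R'}$ with $R'=R/2$, so \eqref{11} and the monotonicity of $F$ give, for $R\ge 2R_0$,
$$
\int_{B_R\setminus B_{R/2}}a\sigma^2\,dV_g\le C\frac{R^2}{4}F(R/2)\le CR^2F(R).
$$
Hence the $\limsup$ in \eqref{8} is finite. Condition \eqref{9} is assumed, and extending $F$ to $[1,R_0)$ by the constant $F(R_0)$ keeps it positive and nondecreasing. Lemma \ref{lem_karp} then yields $E=a|\nabla\sigma|^2\equiv0$.

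Finally, if $a>0$ a.e., then $\nabla\sigma=0$ a.e. Since $\sigma\in H^1_{\mathrm{loc}}$ and $M$ is connected, $\sigma$ is constant.
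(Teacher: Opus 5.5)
Your proposal is correct and follows the paper's proof exactly: the paper applies Lemma~\ref{lem_karp} with the same choices $T=a\sigma\nabla\sigma$, $E=a|\nabla\sigma|^2$, $h=a\sigma^2$, and uses $|T|^2=hE$ together with the product rule for $\operatorname{div}T$. Your additional remarks are details the paper leaves implicit, and you handle them correctly: the distributional reading of $\sigma\operatorname{div}(a\nabla\sigma)\ge0$, the passage from annuli $B_{2R}\setminus B_R$ to $B_R\setminus B_{R/2}$ via monotonicity of $F$, and the extension of $F$ below $R_0$.
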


\begin{proof}
We apply Lemma \ref{lem_karp} with
$$
T:=a\sigma\nabla\sigma, \qquad E:=a|\nabla\sigma|^2, \qquad h:=a\sigma^2.
$$
Indeed, since $\sigma\,\operatorname{div}(a\nabla\sigma)\ge 0$
in the sense of distributions, we have
$$
\operatorname{div}T=\operatorname{div}(a\sigma\nabla\sigma)=a|\nabla\sigma|^2+\sigma\operatorname{div}(a\nabla\sigma)\ge a|\nabla\sigma|^2=E
$$
in the sense of distributions. Moreover,
$$
|T|^2=a^2\sigma^2|\nabla\sigma|^2=hE.
$$
Finally \eqref{8} immediately follows from the definition of $h$ and \eqref{11}. Therefore Lemma \ref{lem_karp} gives $E\equiv0$, namely
$
a|\nabla\sigma|^2\equiv0.
$
If $a>0$ almost everywhere and $M$ is connected, it follows that
$|\nabla\sigma|=0$ almost everywhere on $M$, hence $\sigma$ is constant.
\end{proof}

\

\begin{ackn}
\noindent The authors are grateful to G. Ciraolo, A. Farina and M. Gatti for pointing out a gap in the proof of a capacitary estimate in an earlier version of the manuscript.

The first and third authors are member of the {\em GNSAGA, Gruppo Nazionale per le Strutture Algebriche, Geometriche e le loro Applicazioni} of INdAM. The second author is member of {\em GNAMPA, Gruppo Nazionale per l'Analisi Matematica, la Probabilit\`a e le loro Applicazioni} of INdAM.
\end{ackn}

\

\medskip

\subsection*{Data availability statement}
Data sharing not applicable to this article as no datasets were generated or analysed during the current study.

\medskip

\subsection*{Conflicts of interests/Competing interests} The authors have no conflicts of interest to declare that are relevant to the content of this article.

\medskip

\subsection*{Funding} No funding was received to assist with the preparation of this manuscript.

\

\

\

\

\


\begin{thebibliography}{99}

\bibitem{AnCiFa} C.A. Antonini, G. Ciraolo, A. Farina. {\em Interior regularity results for inhomogeneous anisotropic quasilinear equations.} Math. Ann. 387 (2023), 1745--1776.

\bibitem{BaKr} Z. M. Balogh, A. Krist{\'a}ly. {\em Sharp isoperimetric and Sobolev inequalities in spaces with nonnegative Ricci curvature.} Math. Ann. 385 (2023),  1747--1773.

\bibitem{BCN} H. Berestycki, L. Caffarelli, L. Nirenberg. {\em Further qualitative properties for elliptic equations in unbounded domains.} Ann. Scuola Norm. Sup. Pisa Cl. Sci. (4) 25 (1997), 69--94.


\bibitem{Bre} S. Brendle. {\em Sobolev inequalities in manifolds with nonnegative curvature.} Comm. Pure Appl. Math. 76(9) (2023), 2192--2218.



\bibitem{CaffarelliGidasSpruck}
L.A. Caffarelli, B. Gidas, J. Spruck.
{\em Asymptotic symmetry and local behavior of semilinear elliptic equations
with critical Sobolev growth.}
Comm. Pure Appl. Math. 42 (1989), no. 3, 271--297.

\bibitem{CaiLai} X. Cai, M. Lai. \emph{Liouville equations on complete surfaces with nonnegative Gauss curvature.} Pacific J. Math. 332(2024), no. 1, 23--37.



\bibitem{CatLiMonRon}
G. Catino, Y.Y. Li, D.D. Monticelli, A. Roncoroni. {\em A Liouville theorem in the {Heisenberg} group.} To appear, J. Eur. Math. Soc.

\bibitem{CM}  G. Catino, D.D. Monticelli. \emph{Semilinear elliptic equations on manifolds with nonnegative Ricci curvature.} J. Eur. Math. Soc. 28 (2026), no. 1, pp. 359--392.




\bibitem{CMR}
G. Catino, D.D. Monticelli, A. Roncoroni.
{\em On the critical $p$-Laplace equation.}
Adv. Math. 433 (2023), Paper No. 109331.


\bibitem{CatMonRon2}
  G. Catino, D.D. Monticelli, A. Roncoroni. {\em Rigidity of solutions to singular/degenerate semilinear critical equations.} J. Funct. Anal. 291 (2026), 111538.


\bibitem{CatMonRonWang}
G. Catino, D.D. Monticelli, A. Roncoroni, X. Wang. {\em Liouville Theorems on pseudohermitian manifolds with nonnegative {Tanaka--Webster} curvature.} Preprint, arxiv:2412.08500.

\bibitem{ChenLi}
W. Chen, C. Li.
{\em Classification of solutions of some nonlinear elliptic equations.}
Duke Math. J. 63 (1991), no. 3, 615--622.

\bibitem{CC} G. Ciraolo, R. Corso. \emph{Symmetry for positive critical points of Caffarelli-Kohn-Nirenberg inequalities.} Nonlinear Anal. 216 (2022) 112683.

\bibitem{CirEspLi}
G. Ciraolo, P. Esposito, X. Li. {\em On the classification of solutions to a class of \(N\)-Liouville equations in \(\mathbb R^N\).} Preprint, arxiv:2604.10050.


\bibitem{CFP} G. Ciraolo, A. Farina, C.C. Polvara. \emph{Classification results, rigidity theorems and semilinear PDEs on Riemannian manifolds: A P-function approach.} J. Eur. Math. Soc. (2025), published online first.

\bibitem{CFR} G. Ciraolo, A. Figalli, A. Roncoroni. \emph{Symmetry results for critical anisotropic $p-$Laplacian equations in convex cones.} Geom. Funct. Anal. 30 (2020) 770--803.

\bibitem{CirPol}  G. Ciraolo, C.C. Polvara. {\em On the classification of extremals of Caffarelli-Kohn-Nirenberg inequalities.} Calc. Var. Partial Differential Equations 64 (2025), no. 8, 246.


\bibitem{CiraoloLi}
G. Ciraolo, X. Li.
{\em Classification of solutions to the anisotropic $N$-Liouville equation
in $\mathbb R^N$.}
Int. Math. Res. Not. IMRN 2024 (2024), no. 19, 12824--12856.

\bibitem{CoVo} S. Cohn-Vossen, {\em K\"{u}rzeste Wege und Totalkr\"{u}mmung auf Fl\"{a}chen.} Compositio Math. 2(1935), 69--133.



\bibitem{DamascelliMerchanMontoroSciunzi}
L. Damascelli, S. Merch\'an, L. Montoro, B. Sciunzi.
{\em Radial symmetry and applications for a problem involving the
$-\Delta_p(\cdot)$ operator and critical nonlinearity in $\mathbb R^n$.}
Adv. Math. 265 (2014), 313--335.


\bibitem{DiB} E. DiBenedetto. {\em $C^{1+\alpha}$ local regularity of weak solutions of degenerate elliptic equations.} Nonlin. An. 7(8) (1983), 827--850.

\bibitem{Esp} P. Esposito. {\em A classification result for the quasi-linear {Liouville} equation.} Ann. Inst. H. Poincar\'e C Anal. Non Lin\'eaire 35(3) (2018), 781--801.



\bibitem{EspositoLucia}
P. Esposito, M. Lucia.
{\em Harnack inequalities and quantization properties for the $n$-Liouville
equation.}
Calc. Var. Partial Differential Equations 63 (2024), no. 6,
Paper No. 159, 17 pp.

\bibitem{FlyVet}
J. Flynn, J. V{\'e}tois. {\em Liouville-type results for the {CR} Yamabe equation in the {Heisenberg} group.} Ann. Sc. Norm. Super. Pisa Cl. Sci., to appear, arxiv:2310.14048.



\bibitem{FMM} M. Fogagnolo, A. Malchiodi, L. Mazzieri, {\em A note on the critical Laplace Equation and Ricci curvature.} J. Geom. Anal. 33 (2023), no. 6, 1--17.

\bibitem{Hub} A. Huber. {\em On subharmonic functions and differential geometry in the large.} Comment. Math. Helv. 32 (1957), 13--72.


\bibitem{JL} D. Jerison, J.M. Lee. \emph{Extremals for the Sobolev inequality on the Heisenberg group and the CR Yamabe problem.} J. Am. Math. Soc. 1 (1988) 1--13.

\bibitem{karp} L. Karp. \emph{Subharmonic Functions on Real and Complex Manifolds.} Math. Z. 179, 535--554 (1982).

\bibitem{LiZhang} Y.Y. Li, L. Zhang, {\em Liouville-type theorems and Harnack inequalities for semilinear elliptic
equations.} J. Anal. Math. 90 (2003), 27--87.


\bibitem{Oup} Q. Ou. \emph{ On the classification of entire solutions to the critical $p$-Laplace equation}. Math. Ann. 392 (2025), 1711--1729.

\bibitem{Ou} Q. Ou.  \emph{Classification results of Liouville equations and rigidity of Riemannian surfaces.} Preprint, arXiv:2604.27973.

\bibitem{pet}
P. Petersen,
\emph{Riemannian Geometry},
Graduate Texts in Mathematics, vol.~171,
3rd ed., Springer, Cham, 2016.



\bibitem{Sciunzi}
B. Sciunzi.
{\em Classification of positive $D^{1,p}(\mathbb R^n)$-solutions to the
critical $p$-Laplace equation in $\mathbb R^n$.}
Adv. Math. 291 (2016), 12--23.

 \bibitem{Sun-Wang} L. Sun, Y. Wang. {\em Critical quasilinear equations on Riemannian manifolds.} Preprint.

\bibitem{tashiro}
Y. Tashiro.
{\em Complete Riemannian manifolds and some vector fields.}
Trans. Amer. Math. Soc. 117 (1965), 251--275.

\bibitem{Tol} P. Tolksdorf. {\em Regularity for a more general class of quasilinear elliptic equations.} J. Diff. Eq. 51(1) (1984), 126--150.



\bibitem{Vetois}
J. V\'etois.
{\em A priori estimates and application to the symmetry of solutions for
critical $p$-Laplace equations.}
J. Differential Equations 260 (2016), no. 1, 149--161.

\bibitem{Vetois2} J. V\'etois. {\em A note on the classification of positive solutions to the critical $p$-Laplace equation in $\mathbb{R}^n$.} Adv. Nonlinear Stud. 24 (2024), no. 3, 543--552.






\end{thebibliography}
\end{document}